\newcommand{\bbR}{\mathbb{R}}
\newcommand{\bbC}{\mathbb{C}}
\newcommand{\bbZ}{\mathbb{Z}}
\newcommand{\bbP}{\mathbb{P}}
\newcommand{\bbQ}{\mathbb{Q}}
\newcommand{\bbH}{\mathbb{H}}
\newcommand{\rmO}{\textrm{O}}
\newcommand{\sfU}{\mathsf{U}}
\newcommand{\sfA}{\mathsf{A}}
\newcommand{\sfE}{\mathsf{E}}
\newcommand{\sfQ}{\mathsf{Q}}
\newcommand{\frake}{\mathfrak{e}}
\newcommand{\frakP}{\mathfrak{P}}
\newcommand{\frako}{\mathfrak{o}}
\newcommand{\bfk}{\mathbf{k}}
\newcommand{\calB}{\mathcal{B}}
\newcommand{\calH}{\mathcal{H}}
\newcommand{\calP}{\mathcal{P}}
\newcommand{\calS}{\mathcal{S}}
\newcommand{\calAp}{\mathcal{A}p}
\newcommand{\ev}{\textrm{ev}}
\newcommand{\Aut}{\textrm{Aut}}
\newcommand{\Num}{\textrm{Num}}
\newcommand{\Pic}{\textrm{Pic}}
\newcommand{\Or}{\textrm{O}}
\newcommand{\Cr}{\textrm{Cr}}
\newcommand{\la}{\langle}
\newcommand{\ra}{\rangle}
\newcommand{\Iso}{\textrm{Iso}}
\newcommand{\PO}{\textrm{PO}}
\newcommand{\SO}{\textrm{SO}}
\newcommand{\Mob}{\textrm{M\"ob}}
\newcommand{\GL}{\textrm{GL}}
\newcommand{\circulant}{\textrm{cir}}
\newcommand{\integ}{\textrm{int}}
\newcommand{\Ap}{\mathrm{Ap}}
\newcommand{\oh}{\overline{\bbH}}
\newtheorem{theorem}{Theorem}[section]
\newtheorem{lemma}[theorem]{Lemma}
\newtheorem{corollary}[theorem]{Corollary}
\theoremstyle{definition}
\newtheorem{example}[theorem]{Example}
\theoremstyle{remark}
\newtheorem{remark}[theorem]{Remark}
\numberwithin{equation}{section}
\newcommand{\bsm}{\left(\begin{smallmatrix}}
\newcommand{\esm}{\end{smallmatrix}\right)}
\newcommand{\beq}{\begin{equation}}
\newcommand{\eeq}{\end{equation}}
\title[Orbital counting]{Orbital counting of  curves on algebraic surfaces and  sphere packings}
\author{Igor Dolgachev}
\address{Department of Mathematics, University of Michigan, 525 E. University Av., Ann Arbor, MI, 49109}
\email{idolga@umich.edu}
\dedicatory{To the memory of Andrey Todorov}
\begin{document}

\begin{abstract} We realize the Apollonian group associated to an integral Apollonian circle packings, and some of its generalizations,  as a  group of automorphisms of an algebraic surface. Borrowing some results in the theory of orbit counting, we study the asymptotic of the growth of degrees of elements in  the orbit of a curve on an algebraic surface with respect to a geometrically finite group of its automorphisms. 

\end{abstract}

\maketitle

\section{Introduction} 
Let $\Gamma$ be a discrete subgroup of isometries of a hyperbolic  space $\bbH^n$ and $\rmO_\Gamma(x_0)$ be its orbit. Consider a family of compact subsets $\calB_T$ of $\bbH^n$ whose volume  tends to infinity as $T\to \infty$. The problem of finding the asymptotic of 
$\#\rmO_\Gamma(x_0)\cap \calB_T$ is a fundamental problem in  harmonic analysis, number theory, ergodic theory and 
geometry. In this paper we discuss an  application of this problem to algebraic geometry. 

Let $X$ be a smooth  projective algebraic surface and let $\Num(X)$ be the group of divisor classes on $X$ modulo numerical equivalence. For any divisor class $D$ we denote by $[D]$ its image in $\Num(X)$. The intersection form on divisor classes 
defines a non-degenerate symmetric bilinear form  on $\Num(X)$ of signature $(1,n)$. Let $\Gamma'$ be a group of automorphisms of $X$ such that its image $\Gamma$ in the orthogonal group $\Or(\Num(X))$ is an infinite group.  This implies that $X$ is birationally isomorphic to an abelian surface, a K3 surface, an Enriques surface or the projective plane. 

For any divisor class $D$ we denote by $\rmO_\Gamma(D)$ the $\Gamma$-orbit of $[D]$.   Fix an ample divisor class $H$ and an effective divisor class $C$. For any positive 
real number  $T$, let 
{\small $$N_{T}(H,C) = \#\{[C']\in \rmO_\Gamma(C):H\cdot C' \le T\} = \#\{H'\in \rmO_\Gamma(H):H'\cdot C \le T\}.$$}
We are interested in the asymptotic of this function when $T$ goes to infinity. As far  as I know, this problem was first considered by Arthur Baragar in his two papers \cite{Baragar1}, \cite{Baragar2}.\footnote{I thank Serge Cantat for these references.}

In the present paper we explain how does this problem relate to a general orbital counting problem in the theory of discrete subgroups of Lie groups. More precisely, we consider the group $G = \SO(1,n)_0$ realized as the group of orientation preserving isometries of the 
hyperbolic space $\bbH^n \subset \bbP(V)$ associated with the real inner product vector space $V = \Num(X)_\bbR$ of 
signature $(1,n)$. We will represent points of $\bbH^n$ (resp. points in its boundary $\partial\bbH^n$, resp. the points in 
$\bbP^n\setminus \overline{\bbH^n}$) by  vectors $v\in V$ with $(v,v) = 1, (v,h_0) > 0$ (resp. positive rays of vectors with $(v,v) = 0, (v,h_0) > 0$, resp. vectors with $(v,v) = -1,(v,h_0) > 0$), where $h_0$ is a fixed vector with $(h_0,h_0) = 1$.  Let $\calB_T(e)$ be the set of points $x$ in $\bbH^n$ whose hyperbolic distance from the point $e\in \bbH^n$ (resp. from a fixed horosphere with center at $e$ if $e\in \partial\bbH^n$, resp. from the  orthogonal hyperplane $H_e$  with normal vector 
$e$ if $e\not\in \overline{\bbH^n}$) is less than or equal to $T$.  We use the following result (see \cite{OM}) which represents the state of the art in the study of orbital counting.

\begin{theorem}\label{intro:thm1} Assume that $\Gamma$ is a non-elementary geometrically finite discrete subgroup of 
orientation preserving isometries of 
$\bbH^n$. Let $\delta_\Gamma$ be the Hausdorff dimension of the limit set $\Lambda(\Gamma)$ of $\Gamma$.  If $(e,e)\le 0$, we additionally assume that the orbit $\rmO_\Gamma([e])$ is a discrete set and $\delta_\Gamma > 1$ if $(e,e) < 0$.  Then there exists a positive constant $c_{\Gamma,h_0,e}$ depending only on $\Gamma,[h]$ and $e$ such that 
$$\underset{T\to \infty}{\lim}\frac{\#\rmO_{\Gamma}(h_0)\cap \calB_T}{\exp(T)^{\delta_\Gamma}} = c_{\Gamma,h_0,e}.$$
\end{theorem}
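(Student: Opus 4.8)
The plan is to deduce this from the ergodic theory of the geodesic (frame) flow on the geometrically finite quotient, following the measure-equidistribution strategy of Eskin and McMullen, adapted to infinite volume. First I would fix a base point $o\in\bbH^n$ with maximal compact stabilizer $K=\SO(n)$, identify $\bbH^n=G/K$, and write $h_0 = g_0\cdot o$. Letting $H\le G$ be the stabilizer of the geometric object attached to $e$ --- a maximal compact subgroup when $(e,e)>0$, the horospherical subgroup $NM$ when $(e,e)=0$, and a copy of $\SO(1,n-1)$ when $(e,e)<0$ --- the region $\calB_T$ is cut out by a left-$H$-invariant, right-$K$-invariant function on $G$. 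Consequently
$$\#\rmO_{\Gamma}(h_0)\cap\calB_T = \#\{\gamma\in\Gamma : \gamma g_0 \in H A_{\le T} K\}$$
for a one-parameter family $A_{\le T}$ in the Cartan direction, which recasts the problem as counting points of $\Gamma$ in an expanding bi-$(H,K)$-invariant domain.

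The engine is Patterson--Sullivan theory: I would build the conformal density $\{\mu_x\}$ of dimension $\delta_\Gamma$ on $\Lambda(\Gamma)$ and the associated Bowen--Margulis--Sullivan measure $m^{\mathrm{BMS}}$ on $\Gamma\backslash G$, which is \emph{finite} precisely because $\Gamma$ is geometrically finite. The decisive dynamical input is mixing of the frame flow $\{a_t\}$ with respect to $m^{\mathrm{BMS}}$ (Rudolph, Babillot), which I would upgrade to equidistribution of the expanding translates $a_t\,(\text{piece of the }H\text{-orbit})$ toward $m^{\mathrm{BMS}}$, weighted by the conditional Patterson--Sullivan measures along the $H$-orbit and near $h_0$. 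I would then run the standard thickening argument: replace the counting sum by an integral of the indicator of $H A_{\le T}K$ against a bump supported on a small $G$-neighborhood of $g_0$, apply equidistribution, and shrink the bump after letting $T\to\infty$. The factor $e^{\delta_\Gamma T}$ emerges from the growth of the conformal density under the flow, and the constant factors (up to normalization) as
$$c_{\Gamma,h_0,e}=\frac{\mu^{\mathrm{PS}}_{H}(e)\cdot \mu_{o}(\Lambda)}{\|m^{\mathrm{BMS}}\|},$$
a product of a skinning-type measure attached to $e$ and a density mass attached to $h_0$.

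The main obstacle is the equidistribution step in infinite volume, where there is no spectral gap and the transverse conditionals are merely Ahlfors $\delta_\Gamma$-regular rather than smooth; mixing of $m^{\mathrm{BMS}}$ must be leveraged directly, and one must control escape of mass through the cusps of the geometrically finite, non-cocompact quotient. This is exactly where the hypotheses intervene: discreteness of $\rmO_\Gamma([e])$ for $(e,e)\le 0$ guarantees that the relevant $H$-orbit in $\Gamma\backslash G$ is closed and carries a \emph{finite} skinning measure, so the leading constant is well defined; and $\delta_\Gamma>1$ in the case $(e,e)<0$ ensures that the integral defining the skinning measure along the totally geodesic hyperplane $H_e$ converges. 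Verifying that both factors are strictly positive, and that no boundary terms survive the de-thickening, would be the final delicate checks.
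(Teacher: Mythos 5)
Your proposal is correct and is essentially the approach behind the paper's own treatment: the paper does not prove Theorem \ref{intro:thm1} at all, but quotes it from Mohammadi--Oh \cite{OM} and Oh--Shah \cite{OS}, merely recasting it in Section 7 as the homogeneous-space counting statement (Theorem \ref{om}) for the sets $\calS_T(e,x_0)=G_{x_0}\cdot A_T\cdot H/H$. Your sketch --- Patterson--Sullivan density, finiteness and mixing of the Bowen--Margulis--Sullivan measure for geometrically finite $\Gamma$, equidistribution of expanding $H$-translates weighted by skinning measures, Eskin--McMullen thickening, with discreteness of $\rmO_\Gamma([e])$ forcing the $H$-orbit to be closed with finite skinning measure and $\delta_\Gamma>1$ ensuring convergence of that measure in the hyperplane case --- is precisely the strategy of those cited proofs, so it reconstructs, rather than diverges from, the argument the paper relies on.
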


We apply  this theorem to our situation to obtain the following theorem.

\begin{theorem}\label{intro:thm2} Let $X$ be an abelian surface, or a K3 surface, or an Enriques surface, or a rational 
surface. Let $\Gamma'$ be a group of automorphisms of $X$ such that its image in $\Or(\Num(X))$ is a non-elementary geometrically finite discrete group. Fix an ample numerical divisor class 
$H\in \Num(X)$ and an effective numerical divisor class  $C\in \Num(X)$. Assume that $\delta_\Gamma > 1$ if $C_0^2 < 0$. Then  there exists a positive constant $c_{\Gamma,H,C}$ depending only on $\Gamma, [H]$ and $[C]$ such that 
$$\underset{T\to \infty}{\lim}\frac{N_{T}(H,C)}{T^{\delta_\Gamma}} = c_{\Gamma,H,C}.
$$
\end{theorem}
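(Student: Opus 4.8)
The plan is to turn the counting of intersection numbers $H'\cdot C$ into the counting of points of a $\Gamma$-orbit lying in an expanding family of regions of $\bbH^n$, and then to quote Theorem \ref{intro:thm1}. Since $\Gamma$ preserves the intersection form we have $H\cdot\gamma C=\gamma^{-1}H\cdot C$, which is why the two expressions for $N_T(H,C)$ coincide; I will use the second one. Normalize the ample class by putting $h_0=H/\sqrt{H^2}$, so that $(h_0,h_0)=1$ and $h_0$ represents a point of $\bbH^n$. Writing a typical element of $\rmO_\Gamma(H)$ as $H'=\gamma H=\sqrt{H^2}\,h'$ with $h'=\gamma h_0\in\rmO_\Gamma(h_0)$, the inequality $H'\cdot C\le T$ is equivalent to $(h',C)\le T/\sqrt{H^2}$. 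Hence
\[
N_T(H,C)=\#\{h'\in\rmO_\Gamma(h_0):(h',C)\le T/\sqrt{H^2}\},
\]
and the problem is reduced to describing the sublevel sets of the linear functional $h'\mapsto (h',C)$ on $\bbH^n$ as the regions $\calB_R(e)$ of Theorem \ref{intro:thm1}.

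The next step is to identify the center $e$ and the radius $R=R(T)$, distinguishing three cases according to the sign of $C^2$; this is precisely where the hypothesis on $C_0^2$ enters. If $C^2>0$, set $e=C/\sqrt{C^2}\in\bbH^n$; then $(h',C)=\sqrt{C^2}\,(h',e)=\sqrt{C^2}\cosh d(h',e)$, so the condition becomes $d(h',e)\le R(T):=\operatorname{arccosh}\!\big(T/\sqrt{H^2C^2}\big)$, a genuine metric ball about the interior point $e$. If $C^2=0$, the ray of $C$ is a boundary point $e\in\partial\bbH^n$, and $(h',C)$ equals, up to a fixed positive factor, $\exp(s(h'))$, where $s$ is the signed distance to a reference horosphere centered at $e$; the condition again becomes $s(h')\le R(T)$ with $\exp(R(T))$ proportional to $T$. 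If $C^2<0$, set $e=C/\sqrt{-C^2}$, a point of $\bbP^n\setminus\overline{\bbH^n}$ with $(e,e)=-1$; then $|(h',e)|=\sinh d(h',H_e)$ for the polar hyperplane $H_e$, and the condition becomes $d(h',H_e)\le R(T):=\operatorname{arcsinh}\!\big(T/\sqrt{-H^2C^2}\big)$. In every case $R(T)\to\infty$, and since $\operatorname{arccosh}(s)=\log(2s)+o(1)$ and $\operatorname{arcsinh}(s)=\log(2s)+o(1)$ as $s\to\infty$, one obtains $\exp(R(T))\sim\kappa\,T$ for an explicit constant $\kappa>0$ depending only on $H^2$ and $C^2$ (for instance $\kappa=2/\sqrt{H^2C^2}$ when $C^2>0$).

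With this dictionary the sublevel set is precisely the region $\calB_{R(T)}(e)$, so $N_T(H,C)=\#\,\rmO_\Gamma(h_0)\cap\calB_{R(T)}(e)$. Theorem \ref{intro:thm1} gives $\#\,\rmO_\Gamma(h_0)\cap\calB_R(e)\sim c_{\Gamma,h_0,e}\exp(R)^{\delta_\Gamma}$ as $R\to\infty$; substituting $R=R(T)$ and $\exp(R(T))\sim\kappa T$ yields
\[
N_T(H,C)\sim c_{\Gamma,h_0,e}\,\kappa^{\delta_\Gamma}\,T^{\delta_\Gamma},
\]
so the assertion holds with $c_{\Gamma,H,C}=c_{\Gamma,h_0,e}\,\kappa^{\delta_\Gamma}$. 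It then remains only to verify the extra hypotheses of Theorem \ref{intro:thm1} in the degenerate cases $C^2\le 0$: the discreteness of $\rmO_\Gamma([e])$ follows because $\rmO_\Gamma(C)$ is contained in the lattice $\Num(X)$ and consists of vectors of one and the same self-intersection $C^2$ (as $\gamma$ is an isometry), hence is a discrete subset of $V=\Num(X)_\bbR$; and the requirement $\delta_\Gamma>1$ when $C^2<0$ is exactly the standing assumption stated for $C_0^2$.

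The case $C^2>0$ is immediate once the $\cosh$-dictionary is in place, so I expect the main obstacle to be the two degenerate cases. Concretely, one must check that the normalization of the reference horosphere (when $C^2=0$) and the choice of sign of $(h',e)$ (when $C^2<0$) affect only the constant $\kappa$ and not the exponent $\delta_\Gamma$, and that the discreteness built into Theorem \ref{intro:thm1} is the one enjoyed by a fixed-norm lattice orbit rather than a stronger discreteness of the boundary rays $[\gamma C]$; matching our sublevel sets to the regions $\calB_T(e)$ of the cited counting theorem in these two cases is the step that needs the most care.
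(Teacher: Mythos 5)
Your proposal is correct and takes essentially the same route as the paper: it reduces the statement to the orbital counting theorem (Theorem \ref{intro:thm1}, i.e.\ the Oh--Shah/Mohammadi--Oh results restated as Theorem \ref{om}) via the same $\cosh$/$\exp$/$\sinh$ dictionary between the bilinear form $(h',e)$ and hyperbolic distance to a point, horosphere, or hyperplane according to the sign of $C^2$, with discreteness of the orbit supplied by $\Gamma$ preserving the integral lattice $\Num(X)$. The only difference is one of explicitness: you spell out the normalizations $h_0=H/\sqrt{H^2}$, $e$, the change of variable $R(T)$, and the constant $\kappa$, which the paper leaves implicit in its setup of the regions $B_T(e)$ and its one-line application of Theorem \ref{om}.
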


As we see, the main ingredient of the asymptotic expression  is the Hausdorff dimension $\delta_\Gamma$ of the limit set $\Lambda(\Gamma) \subset \partial \bbH^n$ of $\Gamma$. One of the first and the most beautiful example where $\delta_\Gamma$ was computed with many decimals is the example where $\Gamma$ is an Apollonian group in $\bbH^3$   whose limit set is the closure of a countable union of circles that intersect in at most one point, an \emph{Apollonian gasket}. It was proved by D. Boyd \cite{Boyd} that the asymptotic of curvatures of circles in an Apollonian gasket is equal to $c(T)T^{\delta_\Gamma}$ for some function $c(T)$ such that $\underset{T\to \infty}{\lim}\frac{c(T)}{T} = 0$. He also showed that $1<\delta_\Gamma<2$. It is known now that  
$\delta$ is about $1.305686729...$ \cite{Thomas}. An Apollonian sphere packing is a special case of a  Boyd-Maxwell sphere packing defined by certain Coxeter groups of hyperbolic type introduced by G. Maxwell in \cite{Maxwell}.\footnote{We borrow the terminology from \cite{Chen1}}.  The Hausdorff dimension of the limit set of the Boyd-Maxwell Coxeter groups coincides with 
the sphere packing critical exponent equal to 
\beq\label{crit}
 \inf\{s:\sum_{S\in \frakP} r(S)^s < \infty\} = \sup\{s:\sum_{S\in \frakP}r(S)^s = \infty\},
 \eeq
where $r(S)$ denotes the radius of a sphere $S$ in a sphere packing $\frakP$. Although the Hausdorff dimension of the limit set of a geometrically finite group is usually difficult to compute,  the bounds for sphere packing critical exponent is easier to find. 

We give several concrete examples where the Boyd-Maxwell groups, and, in particular the Apollonian groups, are realized as automorphism groups of algebraic surfaces so that our counting problem can be solved in terms of the sphere packing critical exponent. 
Since this paper addresses a problem in algebraic geometry, we give a short  introduction to the hyperbolic geometry and the theory of sphere packings in a language which we think is more suitable for algebraic geometers. Whenever the details or references are 
omitted, they can be found in \cite{Vinberg}, \cite{Apanasov}, \cite{Ratcliffe}, or  \cite{VinbergS}. 

It is my pleasure to thank Peter Sarnak for his inspiring  talks in Ann Arbor, in April 2014, that gave rise to the present paper. The paper could never be written without the assitance of Jeffrey Lagarias who patiently answered  my numerous questions on the theory of Apollonian sphere packings, of Boris Apanasov and Serge Cantat who helped me to correct some of my illiteracy in the theory of Kleinian groups, and of Hee Oh for explaining to me the state of the art in the general orbital counting problem.  
  
I am also thankful to the organizers of the conference at Schiermonnikoog2014 for giving me an opportunity to talk about the beautiful subject of Apollonian circle packings.

\section{Hyperbolic space} 
Let $\bbR^{n,1}$ denote the Minkowski space defined by the quadratic form (the \emph{fundamental quadratic form})
$$q = -t_0^2+\sum_{i=1}^nt_i^2$$
of signature $(n,1)$.\footnote{Note that we use the signature $(n,1)$ instead of $(1,n)$ to agree with  the standard notations in 
hyperbolic geometry.}  We denote by $(v,w)$ the value of the associated symmetric form on two vectors 
$v,w\in \bbR^{n,1}$, so that $q(v) = (v,v)$. Let
$$\bbH^{n} = \{v\in \bbR^{n,1}:q(v) < 0\}/\bbR^* \subset \bbP^{n+1}(\bbR)$$ 
be the $n$-dimensional \emph{hyperbolic} (or \emph{Lobachevsky}) space, a Riemannian space of
 constant negative curvature.  Its points are lines $[v] = \bbR\cdot v$ spanned by a vector 
$v\in \bbR^{n,1}$ with $(v,v) < 0$. The set of real points of the \emph{fundamental quadric} $\sfQ:q = 0$  is called the \emph{absolute}. The union of $\bbH^{n}$ and the absolute is the closure $\overline{\bbH}^{n}$ of $\bbH^{n}$ in $\bbP^{n}(\bbR)$. 
In affine coordinates $y_i = x_i/x_0$, one represents $\bbH^{n}$ by the interior of the $n$-dimensional ball
$$B_{n} = \{(y_1,\ldots,y_{n}): \sum_{i=1}^{n}y_i^2 < 1\}.$$
This is called the \emph{Klein model} of $\bbH^n$. In this model the Riemannian metric of constant curvature $-1$ is given by
$$ds^2 = \frac{(1-|y|^2)\sum_{i=1}^ndy_i^2+(\sum_{i=1}^ny_idy_i)^2}{(1-|y|^2)^2},$$
where $|y|^2 = \sum_{i=1}^ny_i^2$. 
 One can also represent any point in $\bbH^{n}$ by a vector 
$v = (x_0,\ldots,x_{n})\in \bbR^{n,1}$ with $(v,v) = -1, x_0 > 0$. Thus, we obtain a model of 
$\bbH^{n}$ as one sheet of a two-sheeted hyperboloid $Y^n$ in $\bbR^{n,1}$
$$-t_0^2+\sum_{i=1}^{n+1}t_i^2 = -1,\  t_0 > 0.$$
This model is called the \emph{vector model} of $\bbH^n$. The hyperbolic distance $d(v,w)$ in this model between two points in $\bbH^n$ satisfies
\beq\label{dist1}
\cosh d(v,w) = -(v,w).
\eeq
Here 
and later we use that, for any two vectors $v,w$ in the vector model of $\bbH^n$, we have 
$(v,w) < 0$ (see \cite{Ratcliffe}, Theorem 3.1.1).
The maps 
$$B^n\to Y^n,\  (y_1,\ldots,y_n)\mapsto \bigl(\frac{1}{\sqrt{1-|y|^2}},\frac{y_1}{\sqrt{1-|y|^2}},\ldots,\frac{y_n}{\sqrt{1-|y|^2}}\bigr)$$
and
$$Y^n\to B_n, \ (t_0,\ldots,t_{n+1}) \mapsto \bigl(\frac{t_1}{t_0},\ldots,\frac{t_n}{t_0}\bigr)$$
are inverse to each other.

The induced metric of $\bbR^{n,1}$
$
ds^2 = -dt_0^2+\sum_{i=1}^{n}dt_i^2$,
 after the scaling transformation 
$y_i = t_i/q(t)^{1/2}$, defines a Riemannian metric on $\bbH^{n}$ of constant curvature $-1$
\beq\label{m1}
ds^2 = \frac{1}{(1-|y|^2)^2}\bigl((1-|y|^2)\sum_{i=1}^{n}dy_i^2+(\sum_{i=1}^{n}y_idy_i)^2\bigr),
\eeq
where $|y|^2 = y_1^2+\cdots+y_n^2$. Consider the section of $B_{n}$ by the the hyperplane $y_{n} = 0$. It is isomorphic to the $n-1$-dimensional ball
\beq\label{ball1}
'B_{n-1} = \{(y_1,\ldots,y_{n-1})\in \bbR^{n-1}:\sum_{i=1}^{n-1}y_i^2 < 1\}.
\eeq
The restriction of the metric \eqref{m1} to $'B_{n-1}$ coincides with the metric of $\bbH^{n-1}$. Thus we may view 
$'B_{n-1}$ as a ball model of a geodesic hypersurface of $B_n = \bbH^n$. 

Let $p = [1,0,\ldots,0,-1]\in \sfQ(\bbR)$ be the southern pole of the absolute. The projection
$\bbP^n\dasharrow \bbP^{n-1}$ from this point is given by the formula 
$[t_0,\ldots,t_n]\mapsto [t_1,\ldots,t_{n-1},t_0+t_n]$. Since the hyperplanes 
$t_0+t_n = 0, t_0 = 0$ do not intersect  $\sfQ(\bbR)$, 
$\sfQ(\bbR)\setminus \{p\}$ maps bijectively onto $\bbR^{n-1}$ with coordinates $u_i = \frac{y_i}{y_n+1}$. The rational
map  defined by the formula
\beq\label{phi1}
\Phi:\bbP^{n-1}\to \sfQ,\  [x_0,\ldots,x_{n-1}]\mapsto 
[x_0^2+|x|^2,2x_0x_1,\ldots,2x_0x_{n-1},x_0^2-|x|^2],
\eeq
where $|x|^2 = x_1^2+\cdots+x_{n-1}^2$, blows down the hyperplane $x_0 = 0$ to the point $p$ and equals the inverse of the restriction of the projection $\sfQ(\bbR)\setminus \{p\}\to \bbP^{n-1}(\bbR)$.
 In affine coordinates $u_i = x_i/x_0$ in $\bbP^{n-1}\setminus \{x_0 = 0\}$ and affine 
coordinates $(y_1,\ldots,y_n)$ in $\sfQ\setminus \{t_0= 0\}$ the map is defined by the formula
\beq\label{form1}
(u_1,\ldots,u_{n-1}) \mapsto (y_1,\ldots,y_n) = 
(\frac{2u_1}{1+|u|^2},\ldots,\frac{2u_{n-1}}{1+|u|^2},\frac{1-|u|^2}{1+|u|^2}),
\eeq
where $|u|^2 = u_1^2+\cdots+u_{n-1}^2$. Let $Q^+$ be the part of $\sfQ(\bbR)$ where $y_n > 0$ (the northern hemisphere). 
The preimage of $Q^+$ under the map $\Phi$ is the subset of $\bbR^{n-1}$ defined by the inequality $|u|^2 < 1$. It is 
a unit ball $B_{n-1}$ of dimension $n-1$. Consider the orthogonal projection $\alpha$ of $Q^+$ to the ball 
$B_{n-1}$ from \eqref{ball1}. The composition 
\beq\label{comp}
\alpha\circ\Phi:B_{n-1} \to 'B_{n-1}, \ (u_1,\ldots,u_{n-1})\mapsto 
\bigl(\frac{2u_1}{1+|u|^2},\ldots,\frac{2u_{n-1}}{1+|u|^2}\bigr)
\eeq
 with the inverse 
 $$'B_{n-1}\to B_{n-1}, \ (y_1,\ldots,y_{n-1}) \mapsto  
 \bigl(\frac{y_1}{1+\sqrt{1-|y|^2}},\ldots,\frac{y_{n-1}}{1-\sqrt{1+|y|^2}}\bigr)$$
 is a bijection which we can use to transport the metric on 
$'B_{n-1}$ to a metric on $B_{n-1}$ with curvature equal to $-1$. In coordinates $u_1,\ldots,u_{n-1},$ it is given by the formula
$$ds^2 = 4 (1-|u|^2)^{-2}\sum_{i=1}^{n-1}du_i^2.$$
This is a metric on $B_{n-1}$ of constant curvature $-1$, called the \emph{Poincar\'e metric}. The corresponding 
model of the hyperbolic space $\bbH^{n-1}$ is called a \emph{conformal} or \emph{Poincar\'e model} of $\bbH^{n-1}$.

Yet, another model of $\bbH^n$ is realized in the \emph{upper half-space}
$$\calH^n = \{(u_1,\ldots,u_n)\in \bbR^n, u_n > 0\}.$$
The isomorphism $B_n\to \calH^n$ is defined by 
$$(u_1,\ldots,u_n)  = \frac{1}{\rho^2}(2y_1,\ldots,2y_{n-1},2(y_n+1)-\rho^2),$$
where $\rho^2 = y_1^2+\cdots+y_{n-1}^2+(y_n+1)^2$. The metric on $\calH^n$ is given by 
$$ds^2 = \frac{1}{y_n^2}\sum_{i=1}^ndu_i^2.$$

A \emph{$k$-plane}  $\Pi$ in $\oh^{n}$ is the non-empty intersection of a $k$-plane in $\bbP^{n}(\bbR)$ with $\oh^{n}$. We call it \emph{proper} if its intersection with $\bbH^{n}$ is non-empty, and hence, it is a geodesic submanifold  of $\bbH^{n}$ of dimension $k$ isomorphic to $\bbH^{k}$. 

A \emph{geodesic line} corresponds to a 2-dimensional subspace $U$ of signature $(1,1)$. It has a basis $(f,g)$ that consists of isotropic vectors with $(f,g) = -1$ (called a \emph{standard basis}). Thus any geodesic line $\ell$ intersects the absolute at two different points $[f]$ and $[g]$. We can choose a parameterization $\gamma:\bbR\to l$ of $\ell$ of the form $\gamma(t) = \frac{1}{\sqrt{2}}(e^tf+e^{-t}g)$ so that $\gamma(-\infty) = [f]$ and $\gamma(+\infty) = [g]$. The hyperbolic distance $d(\gamma(0),\gamma(t))$ from the point $\gamma(0) = \frac{1}{\sqrt{2}}(f+g)$ to the point $\gamma(t)$ satisfies 
$d(\gamma(0),\gamma(t)) = -(\gamma(0),\gamma(t)) = \cosh t$. Thus $t = d(\gamma(0),\gamma(t))$ is the natural parameter of the geodesic line. The subgroup of $\SO(U)_0 \cong SU(1,1)_0$ is isomorphic to $\bbR_{> 0}$ and consists of transformation $g_t$ given in the standard basis by the matrix $\bsm e^t&0\\
0&e^{-t}\esm$. It can be extended to a group of  isometries  of $\bbR^{n,1}$ that acts identically on $U^\perp$. Comparing with the natural parameterization of $l$, we see that each transformation $g_t$ moves a point $x\in l$ to a point $g_t(x)$ such that 
$d(x,g_t(x)) = t$. For this reason, it is called the \emph{hyperbolic translate}. 


A $n-1$-plane $H$ is called a \emph{hyperplane}. We write it in the form 
$$H_{\frake} = \{v\in \bbR^{n,1}\setminus \{0\}:(v,\frake) = 0, (v,v)\le 0\}/\bbR^*.$$
 Since the signature of $q$ is equal to $(n,1)$,  $\bbH^{n}\cap H_\frake = \emptyset$  if and only if $(\frake,\frake) \ge 0$. If $(\frake,\frake) = 0$, the intersection with $\oh^{n+1}$ consists of one point $[\frake]$. We assume that $(\frake,\frake) \ge  0$ and $(\frake,\frake) = 1$ if $(\frake,\frake) > 0$. These properties define $\frake$ uniquely, up to multiplication by a constant (equal to $\pm 1$ if $(\frake,\frake) = 1$). A choice of one of the two rays of the line $\bbR \frake$ is called an \emph{orientation} of the hyperplane. The complement of $H_\frake$ in $\bbH^{n}$ consists of two connected components (half-spaces) defined by the sign of $(v,\frake)$. We denote the closures of these components by $H_\frake^{\pm}$, accordingly. Changing the orientation, interchanges the half-spaces.

Consider a hyperplane  $H_{\frake'}$ in $'B_{n-1}$. It is given by an equation 
$$a_0t_0+\sum_{i=1}^{n-1}a_it_{i} = 0$$
 for some $(a_0,\ldots,a_{n-1})\in \bbR^{n-1,1}$ with norm $1$.

 Under 
the map \eqref{comp}, the pre-image of $H_{\frake'}$ is 
equal to the intersection $S(\frake')\cap \bar{B}_{n-1}$, where 
$$S(\frake'): = -a_0\sum_{i=1}^{n-1}u_i^2+2\sum_{i=1}^{n-1}a_iu_i-a_0 = 0.
$$
If $k = a_0 \ne 0$, we can rewrite this equation in the form
$$k^2\sum_{i=1}^{n-1} (u_i-a_i/k)^2 = \sum_{i=1}^{n-1} a_i^2-a_0^2 =  1.$$
This is a sphere of radius $r = \frac{1}{k}$ with the center $c = (a_1/k,\ldots,a_{n-1}/k)$. It is immediately checked that 
$S(\frake')$ intersects the boundary of $B_{n-1}$ orthogonally (i.e the radius-vectors are perpendicular at each intersection point in the 
Euclidean inner product). The intersection of $S(\frake')$ with $B_{n-1}$ is a hyperbolic codimension 1 subspace of the Poincar\'e model of $\bbH^{n-1}$. 

Assume $k =  0$,  the equation of $S(\frake')$ becomes 
$\sum_{i=1}^{n-1}a_iu_i = 0.$
 The hyperplane $S(\frake')$ should be 
viewed as a sphere of radius $r = \infty$ with the center at the point at infinity corresponding to $p$ in the compactification 
$\bbR^n\cup \{\infty\} \cong \sfQ(\bbR)$. For any $k\ge 0$, the intersection  $S(\frake')\cap B_{n-1}$ is a geodesic hypersurface in 
$\bbH^{n-1}$. It contains the center of the ball if and only if $a_0 = 0$. 

\begin{example} A one-dimensional hyperbolic space can be modeled by an open interval $(-1,1)$. The Poincar\'e and the 
Klein metrics coincide and equal to $dy^2/(1-y^2)^2$. It is isomorphic to the Euclidean one-dimensional 
space in this case.

Assume $n = 2$. We have two models of a hyperbolic 
2-dimensional space $\bbH^2$ both realized in the 
unit disk $B_2$. The the Klein model is defined by the metric 
$$ds^2 = \frac{y_1dy_2^2+y_2dy_1^2}{(1-y_1^2-y_2^2)^2}.$$
The Poincar\'e model is defined by the metric
$$ds^2 = \frac{dy_1^2+dy_2^2}{(1-y_1^2-y_2^2)^2}.$$
In the Klein model, a geodesic line in $\bbH^2$ is a line joining two distinct points on the absolute. In 
the Poincar\'e model, a geodesic line is a circle arc intersecting the absolute orthogonally at two distinct 
points or a line passing through the center of the disk.
\end{example}

The group $\Iso(\bbH^n)$ of isometries of $\bbH^n$ is the index 2 subgroup $\Or(n,1)'$ of the orthogonal group of $\bbR^{n.1}$ that preserves each sheet of the hyperboloid representing the vector model of $\bbH^n$.  When $n$ is even, we have $\Or(n,1)' = \SO(n,1)$. It is also isomorphic to the group $\PO(n,1)$ of  projective transformations of $\bbP^n$ that leaves invariant the quadric $\sfQ = \partial \bbH^n$. It consists of two connected components, the connected component of the identity $\PO(n,1)_0$ is the group of isometries that preserve the orientation. It is isomorphic to the subgroup of $\SO(n,1)_0$ of elements of spinor norm 1.  
 
 Via  the rational map \eqref{phi1}, the group $\Iso(\bbH^n)$ becomes isomorphic to the  subgroup  of the real Cremona group $\Cr_{\bbR}(n-1)$ of $\bbP_\bbR^{n-1}$.  It is generated by 
affine orthogonal transformations of $\bbP_\bbR^{n-1}$ and the inversion birational quadratic transformation
\beq\label{inv1}
[x_0,\ldots,x_{n-1}]\mapsto [\sum_{i=1}^{n-1}x_i^2,x_0x_1,\ldots,x_0x_{n-1}].
\eeq
This group is known classically as the \emph{Inversive group}  in dimension $n-1$. We can also identify $\sfQ$ with a one-point compactification $\hat{E}^{n-1} = \bbR^{n-1}\cup \{\infty\}$ of the Euclidean space of dimension $n-1$. The point $\infty$ corresponds to the point on $\sfQ$ from which we project to $\bbP^{n-1}$. In this model the Inversive group is known as the M\"obius group $\Mob(n-1)$. If we identify, in the usual way, the 2-dimensional sphere with the complex projective line $\bbP^1(\bbC)$, we obtain that 
the group $\Mob(2)$ is isomorphic to $\textrm{PSL}_2(\bbC)$ and consists of M\"obius transformations 
$z\mapsto \frac{az+b}{cz+d}$. The inversion transformation \eqref{inv1} is conjugate to the transformation 
$z\mapsto -1/z$.

It is known that the orthogonal group $\Or(n,1)$ is generated by \emph{reflections} 
\beq\label{ref1}
s_\alpha:v\mapsto v-2\frac{(v,\alpha)}{(\alpha,\alpha)}\alpha,
\eeq 
where $(\alpha,\alpha) \ne 0$. If $(\alpha,\alpha) < 0$, then $s_\alpha$ has only one fixed point in $\oh^n$, the point 
$[\alpha]\in \bbH^n$. If $(\alpha,\alpha) > 0$, then the fixed locus of $s_\alpha$ is a hyperplane in $\oh^n$. The pre-image of  its intersection with the absolute 
$\partial\bbH^n$ is the sphere $S(\alpha/(\alpha,\alpha)^{1/2})$. The M\"obius transformation corresponding to $s_\alpha$ is the inversion transformation that fixes the sphere. 

An isometry $\gamma$ of a hyperbolic space $\bbH^n$ is of the following three possible types: 
 \begin{itemize}
 \item hyperbolic  if $\gamma$ has no fixed points in $\bbH^n$ but has two fixed points on the boundary;   
 \item parabolic if $\gamma$ has no fixed points in $\bbH^n$ but has one fixed point on the boundary; 
 \item elliptic if $\gamma$ has a fixed point in $\bbH^n$.
 \end{itemize}

An element is elliptic if and only if it is of finite order. In the $\hat{E}^{n-1}$ model of the absolute,  a parabolic element corresponds to a translation in the affine orthogonal group $\textrm{AO}(n-1)$.

A discrete subgroup $\Gamma$ of $\Iso(\bbH^n)$ is called a \emph{Kleinian group}. It acts discontinuously on $\bbH^n$ and the  extension of this action to the absolute acts discontinuously on the set $\Omega(\Gamma) = \partial\bbH^n\setminus \Lambda(\Gamma)$, where 
$\Lambda(\Gamma)$ is the set of limit points of $\Gamma$, i.e. points on the boundary that belongs to the closure of the 
$\Gamma$-orbit of a point $x$ in $\bbH^n$ (does not matter which one). The set $\Omega(\Gamma)$ is called the 
\emph{discontinuity set} of $\Gamma$. The set $\bbH^n\cup \Omega(\Gamma)$ is the largest open subset of $\oh^n$ on which $\Gamma$ acts discontinuously. 

Following \cite{Ratcliffe}, \S 12.2, we call a Kleinian group  to be of the \emph{first kind} (resp. of the \emph{second type}) if its discontinuity set is empty (resp. not empty). 

A Kleinian group $\Gamma$ is called \emph{elementary} if it has a finite orbit in $\oh^n$. It is characterized by the property that it contains a free abelian subgroup of finite index. 

The limit set $\Lambda(\Gamma)$ of an elementary Kleinian group is either empty or consists of at most two points, the fixed points of a parabolic or a hyperbolic element in $\Gamma$. Otherwise $\Lambda(\Gamma)$ is a perfect set, i.e. has no isolated points. A fixed point of a hyperbolic or a parabolic transformation  is a limit point. The limit set of a non-elementary Kleinian group is equal to the closure of the set of fixed points of parabolic or hyperbolic elements \cite{Ratcliffe}, Theorem 12.2.2.

A Kleinian group  in dimension 2 is called a \emph{Fuchsian group}. Usually one assumes that $\Gamma$ is contained in the connected component of $\Iso(2)$, i.e. it preserves the orientation. The discontinuity set  $\Omega(\Gamma)$ is either empty, and then $\Gamma$ is called of the first kind, or consists of the union of open intervals, then $\gamma$ is of the second kind. The quotient $X = \bbH^2\cup \Omega(\Gamma)/\Gamma$ is a Riemann surface. If $\Gamma$ is of the first kind, one can compactify 
$X$ by a finite set of  orbits of cuspidal limit points. A Fuchsian group with $\bbH^2/\Gamma$ of finite volume is of the first kind. If $\Gamma$ is of the second kind, then $X$ is a Riemann surface with possibly a non-empty boundary. 

\section{Geometrically finite Kleinian groups}
A non-empty subset $A$ of a metric space is called \emph{convex} if any geodesic connecting two of its points is contained in $A$. Its closure is called a \emph{closed convex subset}. A closed convex subset $P$ of $\bbH^n$ is equal to the 
intersection of 
open half-spaces $H_{\frake_i}, i\in I$. It is assumed that none of the half-spaces $H_{\frake_i}$  contains the intersection of other half-spaces. A maximal non-empty closed convex subset of the boundary of $P$ is called a \emph{side}. Each side 
is equal to the intersection of $P$ with one of the bounding hyperplanes $H_{\frake_i}$. The boundary of $P$ is equal to the union of 
sides and two different sides intersect only along their boundaries.

A \emph{convex polyhedron} in $\bbH^n$ is a convex closed subset $P$ with finitely many sides. It is bounded by a finite set of 
hyperplanes $H_i$.

 We choose the vectors $\frake_i$ as in the previous section  so that 
\beq\label{pol1}
P = \bigcap_{i=1}^N H_{\frake_i}^-.
\eeq

The \emph{dihedral angle} $\phi(H_{\frake_i},H_{\frake_j})$ between two proper bounding hyperplanes is defined by the formula
$$\cos \phi(H_{\frake_i},H_{\frake_j}): = -(\frake_i,\frake_j).$$
If $|(\frake_i,\frake_j)| > 1$, the angle is not defined, we say that the hyperplanes are \emph{divergent}. In this case the distance between $H_\frake$ and $H_{\frake'}$ can be found from the formula
\beq\label{dist2}
\cosh d(H_\frake,H_{\frake'}) = |(\frake,\frake')|.
\eeq

For any subset $J$ of $\{1,\ldots,N\}$ of cardinality $n-k$, such the  intersection of 
$H_{\frake_i}, i\in J$ is a $k$-plane $\Pi$ in $\bbH^n$, the intersection $P\cap \Pi$ is a polyhedron in 
$\Pi$. It is called a \emph{$k$-face} of $P$. A $(n-1)$-face of $P$ is a side of $P$. 

The matrix 
$$G(P) = (g_{ij}), \quad g_{ij} = (\frake_i,\frake_j),$$
is called the \emph{Gram matrix} of $P$. There is a natural bijection between the set of its $k$-dimensional proper faces and 
positive definite principal submatrices of $G(P)$ of size $n-k$. The improper vertices of $P$ correspond to positive semi-definite principal submatrices of size $n$. 

 Recall that a closed subset $D$ of  a metric space $X$ is called a \emph{fundamental domain} for a group $\Gamma$ of isometries of $X$ if 
 \begin{itemize} 
 \item[(i)] the interior $D^o$ of $D$ is an open  set;
\item[(ii)] $\gamma(D^o)\cap D^o = \emptyset$, for any $\gamma\in \Gamma\setminus \{1\}$;
\item[(iii)] the set of subsets of the form $\gamma(D)$ is locally finite\footnote{This means that every point $x\in \bbH^n$ is contained in a finite set of subsets $\gamma(D)$};
\item[(iv)] $X = \cup_{\gamma\in \Gamma}\gamma(D)$;
\end{itemize}

A group $\Gamma$ admits a  fundamental domain if and only if it is a 
discrete subgroup of the group of isometries of $X$.  For example, one can choose $D$ to be a \emph{Dirichlet domain} 
\beq
D(x_0) = \{x\in X:d(x,x_0)\le d(\gamma(x),x_0), \ \textrm{for any $\gamma\in \Gamma$}\},
\eeq
where $x_0$ is fixed point in $X$ and $d(x,y)$ denotes the distance between two points. Assume $X = \bbH^n$. For any $\gamma\in \Gamma\setminus \Gamma_{x_0}$, let $H_\gamma$ be the hyperplane of points $x$ such that 
$d(x_0,x) = \rho(x,\gamma(x_0))$. Then $D(x_0) = \cap_{\gamma\in \Gamma\setminus \Gamma_{x_0}}H_\gamma^-$ and, for any  
$\gamma\not \in \Gamma_{x_0}$, $S = \gamma(D(x_0))\cap D(x_0)\subset H_\gamma$ is a side in $\partial D(x_0)$. Each side of $D(x_0)$ is obtained in this way for a unique $\gamma$.

A fundamental domain $D$ for a Kleinian group $\Gamma$ in $\bbH^n$ is called \emph{polyhedral} if its boundary $\partial D = D\setminus D^o$ is contained in the union of a locally finite set of hyperplanes $H_i$ and each side $S$ of the boundary is equal to $D\cap \gamma(D)$ for a unique $\sigma_S\in \Gamma$. A Dirichlet domain is  a convex polyhedral fundamental domain.

A choice of a polyhedral fundamental domain  allows one to find a presentation of $\Gamma$ in terms of generators and relations. The set of generators is the set of elements $\gamma_S\in \Gamma$, where $S$ is a side of $D$.  A relation 
$\gamma_{S_t}\circ \cdots \circ \gamma_{S_1} = 1$ between the generators corresponds to a cycle 
$$D_0 = D, D_1 = \gamma_{S_1}(D_0), \ldots, D_t = \gamma_{S_t}\circ \cdots \circ \gamma_{S_1}(D_0) = D_0.$$

Among various equivalent definitions of a geometrically finite Kleinian group we choose the following one 
(see \cite{Apanasov}, Chapter 4, \S 1): A Kleinian group $\Gamma$ is called \emph{geometrically finite} if it admits a  polyhedral fundamental domain with finitely many sides.
\footnote{Other equivalent definition is given in terms of the convex core of $\Gamma$, the minimal convex subset of $\bbH^n$ that contains all geodesics connecting any two points in $\Lambda(\Gamma)$.}   It follows from above that such a group is finitely generated and finitely presented. The converse is true only in dimension $n\le 2$. In dimensions $n\le 3$, one can show that $\Gamma$ is geometrically finite if and only if there  exists (equivalently any) a Dirichlet fundamental domain with finitely many sides (loc. cit., Theorem 4.4).   On the other hand, for $n > 3$ there are examples of geometrically finite groups all whose Dirichlet domains have infinitely many sides (loc. cit. Theorem 4.5).

\begin{example}\label{coxeter} A convex polyhedron $P$ in $\bbH^{n}$ is called a \emph{Coxeter polyhedron} if each pair of its hyperplanes $H_{\frake_i},H_{\frake_j}$ is either divergent or  the dihedral angle between them is equal to $\pi/m_{ij}$, where $m_{ij}\in \bbZ$ or $\infty$. The \emph{Coxeter diagram} of a Coxeter polyhedron $P$ is a labeled graph whose pairs of vertices corresponding to divergent hyperplanes are joined by a dotted edge, two vertices corresponding to 
hyperplanes forming the zero angle are joined by a solid edge or by putting $\infty$ over the edge, and two vertices corresponding to the 
hyperplanes with the dihedral angle $\pi/m_{ij}\ne 0,\pi/2$ are joined by an edge with the label $m_{ij}-2$, the label is 
dropped if $m_{ij} = 3$.

A Coxeter polyhedron is a fundamental polyhedron of the reflection Kleinian group $\Gamma_P$ generated by 
the reflections $s_{\frake_i}$. For each pair of reflections $s_{\frake_i},s_{\frake_j}$ with $m_{ij}\ne \infty$, the  product $s_{\frake_i}\circ s_{\frake_j}$ is a rotation in angle $2\pi/m_{ij}$ around the codimension 2 face $H_{\frake_i}\cap H_{\frake_j}$. It follows that $(s_i\circ s_j)^{m_{ij}}$ is one of the basic relations. In fact, these relations are basic relations. An abstract 
Coxeter group is a pair $(\Gamma,S)$ consisting of a group $\Gamma$ and a set $S$ of  its generators $s_i$ such that 
$(s_is_j)^{m_{ij}} = 1$ are the basic relations, where $(m_{ij})$ is a matrix with diagonal elements equal to 1 and  off-diagonal elements equal 
to  integers $\ge 2$ or the infinity symbol $\infty$ if the order of the product is infinite. A realization of a Coxeter group $(G,S)$ in the form of $\Gamma_P$, where $P$ is a Coxeter polyhedron in a hyperbolic space $\bbH^n$ is called a hyperbolic Coxeter 
group. It follows that the cardinality of the set $S$ of generators must be equal to $n+1$. Compact Coxeter polytopes (resp. of finite volume)  are described by Lanner (resp. quasi-Lanner) diagrams which can be found for example in \cite{VinbergS}, Chapter 5, Table 4.

Let $\Gamma$ be any discrete group and $\Gamma_r$ be its subgroup generated by reflections. It is a Coxeter group with a fundamental Coxeter polyhedron $P$. Then $\Gamma_r$ is a normal subgroup of $\Gamma$ and
$$\Gamma = \Gamma_r\rtimes A(P),$$
where $A(P)$ is the subgroup of $\Gamma$ which leaves $P$ invariant. 
\end{example}

\section{Sphere packings}
Recall that  a real \emph{n-sphere} $S$ in the projective space $\mathbb{P}_\bbR^n$ over reals is the set of real points $Q(\bbR)$ of a quadric $Q$ defined over $\bbR$ that contains a given nonsingular quadric 
 $Q_0$ with $Q_0(\bbR) = \emptyset$  in a fixed hyperplane $H_\infty$. We fix the projective coordinates to assume that  
 $H_\infty: x_0 = 0$ and $Q_0: x_0=x_1^2+\cdots+x_n^2= 0$. A $1$-sphere is called a \emph{circle}, it is a conic passing through the points $[0,1,\pm i]$ at infinity. We do not exclude the case $n = 1$ where $H_\infty$ is the point $\infty = [0,1]$ and $Q_0 = \emptyset$. A $0$-dimensional sphere is a set of two real points in $\bbP_\bbR^1$ that may coincide.

Let 
$$\bbP_\bbR^n \dasharrow \bbP_\bbR^{n+1}$$ 
be the rational map  defined by the formula
$$[x_0,\ldots,x_n] \mapsto [2x_0^2,-2x_0x_1,\ldots,-2x_0x_n,\sum_{i=1}^nx_i^2].$$
Its image is the quadric $Q$ given by the equation
\beq\label{fund2}
q = 2t_0t_{n+1}+t_1^2+\cdots+t_{n}^2 = 0.
\eeq
This map is given by a choice of a basis in the linear system of quadrics containing the quadric $Q_0$. A different choice of 
a basis leads to the map \eqref{phi1} whose image is a quadric with equation $-t_0^2+\sum_{i=1}^{n+1}t_i^2 = 0$. Let $\frake = (a_0,\ldots,a_{n+1})\in \bbP^{n+1,1}$ with $(\frake,\frake) \ge 0$. We assume that $(\frake,\frake) = 1$ if $(\frake,\frake) > 0$. The pre-image of a hyperplane 
$$H_\frake: a_{0}t_{n+1}+a_{n+1}t_{0}+\sum_{i=1}^{n+1}a_it_i = 0$$
in $\bbP_\bbR^{n+1}$ is a quadric in 
$\bbP_\bbR^n$ defined by the equation 
$$a_{0}\sum_{i=1}^nx_i^2-2a_{n+1}x_0^2-2x_0\sum_{i=1}^{n}a_ix_i = 0.$$
If $a_0\ne 0$, we can rewrite this equation in the form
\beq\label{sphere1}
a_0^2\sum_{i=1}^n(\frac{x_i}{x_0}- \frac{a_i}{a_{0}})^2= 2a_0a_{n+1}+\sum_{i=1}^na_i^2 = 1.
\eeq
So, we can identify its real points with a $n$-dimensional sphere $S(\frake)$ in the Euclidean space $\bbP^{n}(\bbR)\setminus \{x_0 = 0\}$ of radius square
$r^2 = 1/a_0^2$
and the center $c =  [\frac{a_1}{a_0},\ldots,\frac{a_{n}}{a_0}]$. It is natural to call $|a_0|$ the \emph{curvature} of the sphere. If $a_0 = 0$, we get the union of two hyperplanes $H_\infty$ and $S:=\sum_{i=1}^na_ix_i+a_{n+1}x_0= 0$. In this case we set $r := \infty$.
Note that if we take $\frake = (a_0,\ldots,a_{n+1})$ in \eqref{sphere1} with $(\frake,\frake) = 0, a_0\ne 0$, we obtain a quadratic cone
with the vertex at $c$. Its set of real points consists only of the vertex.

One introduces the oriented curvature equal to $a_0$. We agree that the positive curvature corresponds to the interior of the sphere $S(\frake)$, i.e. an open ball $B(\frake)$ of radius $r$.  The negative curvature corresponds to the open exterior of the sphere, we also call it the \emph{ball} corresponding to the oriented sphere $S$ and continue to denote it $B(\frake)$. It can be considered as a ball in the extended Euclidean space $\hat{E}^{n}$.

 A \emph{sphere packing} is an infinite  set $\frakP = (S_i)_{i\in I}$ of oriented $n$-spheres such that any two of them are either 
 disjoint or touch each other. We say that a sphere packing is \emph{strict} if, additionally, no two open balls $B_i$  intersect. An example of a non-strict sphere packing is an infinite set of nested spheres.  
 We assume also that the set $\frakP$ is \emph{locally finite} in the sense that, for any $t > 0$, there exists only finitely many spheres of curvature at most $t$ in any fixed bounded region of the space.  
The condition that two spheres $S_i$ and $S_j$ are disjoint or touch each other is easily expressed in terms of linear algebra. We have the following.

\begin{lemma}\label{touch} Let $S(v)$ and $S(w)$ be two $n$-dimensional spheres corresponding to hyperplanes $H_v$ and $H_w$. Then their interiors do not intersect if and only if 
$$(v,w)\le -1.$$
The equality takes place if and only if the spheres are tangent to each other, and hence intersect at one real point.
\end{lemma}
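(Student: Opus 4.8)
The plan is to convert the algebraic inequality $(v,w)\le -1$ into the Euclidean geometry of the two spheres by one direct computation, and then to recognize the tangency case as the boundary equality. Normalize $v=(a_0,\ldots,a_{n+1})$ and $w=(b_0,\ldots,b_{n+1})$ so that $(v,v)=(w,w)=1$, and suppose first that $a_0,b_0\neq 0$, so that by \eqref{sphere1} the sets $S(v)$ and $S(w)$ are honest spheres, with centers $c_v=(a_1/a_0,\ldots,a_n/a_0)$, $c_w=(b_1/b_0,\ldots,b_n/b_0)$ and radii $r_v=1/|a_0|$, $r_w=1/|b_0|$. Writing out the symmetric bilinear form attached to \eqref{fund2} gives $(v,w)=a_0b_{n+1}+a_{n+1}b_0+\sum_{i=1}^n a_ib_i$.

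The crux is the identity
\[
d^2 = r_v^2+r_w^2-2\,\mathrm{sign}(a_0b_0)\,r_vr_w\,(v,w),
\]
where $d=|c_v-c_w|$ is the Euclidean distance between the centers. I would derive it by expanding $d^2=\sum_{i=1}^n(a_i/a_0-b_i/b_0)^2$ and inserting $\sum a_i^2=1-2a_0a_{n+1}$ and $\sum b_i^2=1-2b_0b_{n+1}$ (from the normalization) together with $\sum a_ib_i=(v,w)-a_0b_{n+1}-a_{n+1}b_0$; the terms carrying $a_{n+1}$ and $b_{n+1}$ cancel in pairs, leaving $d^2=a_0^{-2}+b_0^{-2}-2(a_0b_0)^{-1}(v,w)$, which is the displayed formula. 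In classical language this says that $(v,w)$ is the negative of the inversive distance of the two spheres.

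With this identity in hand the geometric translation is immediate. When both orientations are positive, $a_0,b_0>0$, the balls $B(v),B(w)$ are the two open interiors, and they are disjoint exactly when $d\ge r_v+r_w$; substituting $(r_v+r_w)^2$ for $d^2$ and dividing by $2r_vr_w>0$ turns this into $(v,w)\le -1$, with external tangency $d=r_v+r_w$ (a single contact point) corresponding precisely to $(v,w)=-1$. The factor $\mathrm{sign}(a_0b_0)$ is exactly what renders the criterion independent of the chosen orientations: in the other configurations occurring in a packing, where some $B(\frake)$ is an exterior region and ``disjoint'' becomes a nesting relation such as $d+r_w\le r_v$, the same substitution again produces $(v,w)\le -1$, with equality at the tangency $d=r_v-r_w$.

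The step I expect to need the most care is the degenerate one, where $a_0=0$ or $b_0=0$ and one locus is a hyperplane, i.e.\ a sphere of radius $\infty$. I would handle it either by letting $a_0\to 0$ in the identity above, or directly: for $a_0=0$ the inequality $(v,w)\le -1$ reduces to a comparison between the signed Euclidean distance from $c_w$ to the hyperplane $S(v)$ and the radius $r_w$, which again says exactly that the half-space $B(v)$ and the ball $B(w)$ do not overlap, with equality when the sphere is tangent to the hyperplane. Beyond this orientation bookkeeping and the infinite-radius limit, the whole lemma rests on the single computation of the displayed identity.
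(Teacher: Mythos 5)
Your core computation is correct, and your route is genuinely different from the paper's. The paper never touches centers and radii: it argues projectively, via the pencil of quadrics $\lambda S(v)+\mu S(w)$, characterizing a common real point of the two spheres as an isotropic vector $x$ with $(x,v)=(x,w)=0$; such a vector exists if and only if the plane $\la v,w\ra$ is positive or semi-positive definite, i.e. $(v,v)(w,w)-(v,w)^2\ge 0$, with the semi-definite (degenerate) case corresponding to tangency, where the pencil contains a cone. This gives $|(v,w)|\ge 1$ for non-crossing spheres in one stroke, uniformly in all dimensions and with no separate treatment of the hyperplane case $a_0=0$ that you must handle by a limit or a side computation. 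What your approach buys instead is the explicit identity $d^2=r_v^2+r_w^2-2\,\mathrm{sign}(a_0b_0)\,r_vr_w\,(v,w)$, i.e. the recognition of $(v,w)$ as (minus) the inversive distance; I checked the cancellation of the $a_{n+1},b_{n+1}$ terms and the identity is right. This is sharper than the paper's argument: it not only decides disjointness but measures separation, and it makes the tangency characterization immediate.

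One caveat, which applies to your proof, to the paper's proof, and in fact to the lemma as literally stated: the converse implication is delicate when the orientations are not both positive. Since $(v,w)=(-v,-w)$, the inner product cannot distinguish a configuration of oriented balls from the one obtained by reversing both orientations, while disjointness of $B(v)$ and $B(w)$ is not invariant under that reversal. Concretely, take concentric spheres of radii $4$ and $1$, the first oriented inward, $v=(1/4,0,\dots,0,2)$, the second outward, $w=(-1,0,\dots,0,-1/2)$: then $(v,w)=-17/8\le -1$, yet $B(v)\cap B(w)$ is a nonempty annulus. In your mixed case, $(v,w)\le -1$ only yields $d\le |r_v-r_w|$, i.e. that one closed ball is nested in the other, but not which one relative to the orientations; your sentence treating the nesting relation $d+r_w\le r_v$ as equivalent to $(v,w)\le -1$ glosses over exactly this. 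The paper's proof buries the same difficulty in the unproved assertion that ``$(v,w)$ must be negative'' (the earlier remark it appeals to concerns vectors of negative norm and does not apply here). So what both arguments genuinely establish is: disjointness of the balls implies $(v,w)\le -1$, with equality precisely at tangency, and the converse holds once orientations are chosen consistently (e.g. both curvatures positive, or the packing convention in which at most one sphere is oriented outward) --- which is all that is used later in the paper.
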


\begin{proof} Consider the pencil of spheres  $\lambda S(v)+\mu S(w) = H(\lambda v+\mu w)$ (here, for brevity of notation,  we do not normalize the normal vector defining the hyperplane $\lambda H(v)+\mu H(\mu)$). It is clear that  
$B(v)\cap B(w) = \emptyset$ if and only if $S(v) = S(w) = \emptyset$ or the pencil contains a cone with its vertex equal to the tangency points of the spheres. A point of intersection of two spheres is defined by a vector $x$ such that 
$(x,v) = (x,w) = (x,x) = 0$. Such $x$ exists if and only if the plane $\la v,w\ra$ spanned by $v$ and $w$ is positive or semi-positive definite. This happens if and only if $(v,v)(w,w)-(v,w)^2 \ge 0$. The equality takes place if and only $x\in \la v,w\ra$. This is equivalent to that one of the spheres in the pencil is a degenerate sphere, i.e. a cone with its vertex equal to the tangency point of the two spheres. Thus we proved that $|(v,w)|\ge 1$. As we noted before, $(v,w)$ must be negative. 
\end{proof}

 Let $\Gamma$ be a geometrically finite Kleinian group in $\bbH^{n+1}$ that is identified with a subgroup of the M\"obius group $\Mob(n)$. We say that a sphere packing is a \emph{sphere $\Gamma$-packing} if 
 it is invariant with respect to the action of $\Gamma$ on spheres and has only finitely many orbits, say $N$, on the set. We also say that a sphere $\Gamma$-packing is \emph{clustered} if one can choose representatives $S_1,\ldots,S_N$ of the orbits such  that $\frakP$ is equal to the union of the sets $\{\gamma(S_1),\ldots,\gamma(S_N)\}, \gamma\in \Gamma$. Each such set is called a \emph{cluster} of the sphere packing. We will see later than Apollonian sphere packings and, their generalizations, Boyd-Maxwell sphere packings, are examples of clustered sphere $\Gamma$-packings.

Let $\Gamma_i$ be the stabilizer subgroup of some sphere $S_i$ in a sphere $\Gamma$-packing. The sphere corresponds to a 
geodesic hyperplane $H_i$ in $\bbH^{n+1}$, hence the ball $B_i$ is the Klein model of $\bbH^{n}$ on which 
$\Gamma_i$ acts as  a geometrically finite Kleinian group. Its limit points are on $S_i$. We know that they belong to the closure of the set of 
fixed points of non-elliptic elements in $\Gamma_i$, hence they are limit point of the whole $\Gamma$. 

Let us introduce the following  assumptions.
\begin{itemize}
\item[(A1)]  for any sphere $S_i$, 
 $\Lambda(\Gamma_i) = S_i.$ 
\item[(A2)] $\bigcup_{i\in I}\bar{B}_i = \hat{E}^n.$
\end{itemize}

Assumption (A1) implies that 
\beq\label{ass1}
 \overline{\bigcup_{i\in I}S_i} \subset \Lambda(\Gamma).
\eeq  
Assumption (A2) means that the packing is \emph{maximal}, i.e. one cannot find a sphere whose interior is disjoint from the interiors of all other spheres in the packing. 

If the packing is strict,  assumptions (A1) and (A1) imply  the equality in \eqref{ass1}. Indeed, suppose $x$ is a limit point of $\Gamma$ that does not lie in the closure of the union of the spheres.  By (A2), it belongs to 
the closure of some open ball $B_i$. Suppose $x$ belongs to the open ball $B_i$. We mentioned before that the limit set is the closure of the set of fixed points of 
non-elliptic elements  of  $\Gamma$.  Thus we can find some open neighborhood of $x$ inside $B_i$ that contains a fixed point of a non-elliptic element $\gamma$ in $\Gamma$. The stabilizer  subgroup $\Gamma_i$ of $B_i$ in  $\Gamma$ is a Kleinian group acting on $B_i$. It cannot contain a non-elliptic element with a fixed point in the ball. This shows that $\gamma\not\in \Gamma_i$ and hence $\gamma(B_i)\subsetneq B_i$. Thus the set $\{\gamma^n(B_i),n\in \bbZ\}$  consists of nested spheres. This is excluded by the definition of a strict sphere packing.

In the  next two sections we give examples of sphere $\Gamma$-packings satisfying  assumptions (A1) and (A2).

\section{Boyd-Maxwell sphere packings}
Let $P$ be a Coxeter polytope in $\bbH^{n+1}$ with the Gram matrix $G(P)$. Let $\frake_1,\ldots,\frake_{n+2}$ be the normal vectors of its bounding hyperplanes. We can take the set $(\frake_1,\ldots,\frake_{n+2})$ to be  a basis of the space 
$\bbR^{n+1,1}$ with the fundamental quadric \eqref{fund2}. 

Let $\omega_j$ be a vector in $\bbR^{n,1}$ uniquely  determined by the condition
\beq
(\omega_j,\frake_i) = \delta_{ij}.
\eeq
We have 
\beq\label{ff}
\omega_j = \sum_{i=1}^{n}g^{ij}\frake_i,\eeq
where 
$$G(P)^{-1} = (g^{ij}) = ((\omega_i,\omega_j)).$$
We call $\omega_i$ \emph{real} if  $(\omega_i,\omega_i) = g^{ii} > 0.$ In this case we can normalize it to set
$$\bar{\omega}_i:= \omega_i/\sqrt{g^{ii}}.$$
  Let $J = \{j_1<\cdots < j_r\}$ be the subset of $\{1,\ldots,n+2\}$ such that 
$\bar{\omega}_j$ is real if and only if $j\in J$. Consider the union
\beq\label{cluster}
\frakP(P) = \bigcup_{k = 1}^rO_{\Gamma_P}(S(\bar{\omega}_{j_k})).
\eeq

A \emph{Boyd-Maxwell sphere packing} is a sphere packing  of the form $\frakP(P)$, where $P$ is a Coxeter polytope. By definition, it is clustered with clusters 
$$(\gamma(S(\bar{\omega}_{j_1})),\ldots,\gamma(S(\bar{\omega}_{j_r}))) = (S(\gamma(\bar{\omega}_{j_1})),\ldots,
S(\gamma(\bar{\omega}_{j_r})).$$
We call the cluster $(S(\omega_{j_1}),\ldots,S(\omega_{j_r}))$ the \emph{initial cluster}. It is called \emph{non-degenerate} if all $\omega_i$ are real. This is 
equivalent to that all principal maximal minors of the matrix $G(P)$ are negative. 

Following G. Maxwell \cite{Maxwell}, we say that the Coxeter diagram is of \emph{level l} if, after deleting any $l$ of its vertices, we obtain a Coxeter diagram of Euclidean  or of parabolic type describing a Coxeter polyhedron in an Euclidean or a spherical geometry. They can be characterized by the property that all $m_{ij}\ne \infty$ (except in the case $n=1$) and the symmetric 
matrix $(m_{ij})$ is non-negative definite, definite in the Euclidean case, and having a one-dimensional radical in the parabolic case.  Coxeter diagrams of level $l = 1$ correspond to Lanner and quasi-Lanner Coxeter groups. Coxeter diagrams of level 2 have been classified by G. Maxwell (with three graphs omitted, see  \cite{Chen1}). They occur 
only in dimension $n\le 10$.  For $n\ge 4$ they are obtained from quasi-Lanner diagrams by adding one vertex. 

For example, a Coxeter 
polyhedron in $\bbH^{10}$ with Coxeter diagram of level 2

$$\xy (-20,10)*{};(-20,-15)*{};
@={(-10,0),(0,0),(10,0),(20,0),(30,0),(40,0),(50,0),(60,0),(70,0),(80,0),(10,-10)}@@{*{\bullet}};
(-10,0)*{};(80,0)*{}**\dir{-};
(10,0)*{};(10,-10)*{}**\dir{-}
\endxy
$$
defines a sphere packing with only one real $\omega_i$ corresponding to  the extreme  vertex on the right.

The following theorem is proven, under a certain assumption, in loc.cit., Theorem 3.3. The assumption had been later removed in \cite{Maxwell2}, Theorem 6.1.\footnote{I am grateful to Hao Chen for the reference.}

\begin{theorem}\label{maxwell} Let $P$ be a Coxeter polyhedron in $\bbH^n$. Then $\frakP(P)$ is a sphere 
$\Gamma_P$-packing if and only if the Coxeter diagram of $P$ is of level $\le 2$.
\end{theorem}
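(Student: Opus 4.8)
The plan is to translate the packing condition into inequalities among inner products of $\Gamma_P$-images of the normalized weights $\bar{\omega}_j$, and then to recognize those inequalities as the level $\le 2$ condition via the passage from $G(P)$ to $G(P)^{-1}$. Since the form is $\Gamma_P$-invariant and each $\bar\omega_j$ with $j\in J$ satisfies $(\bar\omega_j,\bar\omega_j)=1$, Lemma \ref{touch} shows that $\frakP(P)$ is a sphere packing if and only if for every pair $i,j\in J$ and every $w\in\Gamma_P$ with $S(\bar\omega_i)\ne S(w\bar\omega_j)$ one has $(\bar\omega_i,w\bar\omega_j)\le -1$. First I would record the reflection formula $s_{\frake_k}(v)=v-2(v,\frake_k)\frake_k$ (using $(\frake_k,\frake_k)=1$) together with $(\bar\omega_i,\frake_k)=\delta_{ik}/\sqrt{g^{ii}}\ge 0$; the non-negativity of these pairings is the monotonicity engine for everything that follows.

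Second, I would dispose of the initial inequalities, those with $w=1$. For $i\ne j$ in $J$ the inequality $(\bar\omega_i,\bar\omega_j)\le -1$ splits into a magnitude part $(g^{ij})^2\ge g^{ii}g^{jj}$ and a sign part $g^{ij}<0$. The magnitude part says $\det G(P)^{-1}[\{i,j\}]\le 0$, and by Jacobi's identity on complementary minors this principal $2\times 2$ minor of $G(P)^{-1}$ equals $\det G(P)[\{i,j\}^{c}]/\det G(P)$. As $G(P)$ has Lorentzian signature its determinant is negative, so the magnitude condition becomes $\det G(P)[\{i,j\}^{c}]\ge 0$, i.e. non-negativity of the Gram determinant of the subdiagram obtained by deleting the two vertices $i,j$. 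By Cauchy interlacing this subdiagram has at most one negative eigenvalue, so away from the degenerate value $\det=0$ non-negativity of the determinant is exactly positive definiteness, i.e. the subdiagram is spherical or Euclidean — the level $2$ condition for that pair of vertices. This computation links the two sides of the equivalence; the boundary case $\det=0$, corresponding to tangency, is precisely the Euclidean case and must be matched up carefully.

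Third, and this is the crux of the sufficiency direction, I would treat general $w\in\Gamma_P$ by induction on the word length $\ell(w)$. Writing $v=w\bar\omega_j$ and choosing a generator with $\ell(s_{\frake_k}w)<\ell(w)$, the identity $(\bar\omega_i,s_{\frake_k}v)=(\bar\omega_i,v)-2(\bar\omega_i,\frake_k)(\frake_k,v)$ together with $(\bar\omega_i,\frake_k)\ge 0$ shows that the inner product can only decrease as long as $(\frake_k,v)\ge 0$, a condition governed by the chamber geometry of the Tits cone. The aim is to prove that the minimum of $(\bar\omega_i,w\bar\omega_j)$ over all $w$ is attained for $w$ in the finite set of short elements associated with the standard parabolic subgroups fixing the two weights, reducing the infinitely many inequalities to the finite family of minor conditions already analyzed. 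The subgroup generated by the $s_{\frake_k}$ with $k\ne i,j$ fixes both $\bar\omega_i$ and $\bar\omega_j$ and is the Coxeter group of the deleted subdiagram; level $\le 2$ forces this group to be finite or Euclidean, which is exactly what prevents its orbit from producing nested or transversally intersecting spheres. I expect this orbit-control step to be the main obstacle: one must rule out that some long $w$ drives the inner product back above $-1$, and it is the geometric finiteness of $\Gamma_P$ together with the Euclidean/spherical nature of every two-vertex deletion that closes the induction.

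Finally, for necessity I would argue contrapositively: if $P$ has level $\ge 3$ then some two vertices have an indefinite deletion, whose Gram determinant is negative. When these vertices lie in $J$, the computation of the second paragraph yields $(\bar\omega_i,\bar\omega_j)^2<1$, so the two spheres of the initial cluster meet transversally and $\frakP(P)$ is not a packing; in general one uses the hyperbolic reflection subgroup supported on such an indefinite deletion to move a real weight sphere across another, again destroying the packing property. Matching an arbitrary indefinite two-vertex deletion to an actual transversal pair of orbit spheres — rather than only the deletions indexed by real weights — is the delicate point here, and is where the interaction between $J$ and arbitrary vertex subsets must be controlled.
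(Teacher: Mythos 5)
First, a point of comparison: the paper itself contains no proof of this statement. It is quoted as Maxwell's theorem, with the proof attributed to \cite{Maxwell}, Theorem 3.3, and (for the removal of an extra hypothesis) to \cite{Maxwell2}, Theorem 6.1. So your proposal must be judged on its own merits, and it has a genuine gap beyond the two ``delicate points'' you flag. The gap is that your packing criterion is only ever tested on pairs of spheres attached to \emph{distinct} weights $i\ne j$. A $\Gamma_P$-packing condition must hold for every pair of spheres in $\frakP(P)$, and most pairs lie in the \emph{same} orbit: one needs $(\bar{\omega}_i,w\bar{\omega}_i)\le -1$ for every $w\in \Gamma_P$ with $w\bar{\omega}_i\ne \bar{\omega}_i$ (in the Apollonian gasket, for instance, same-orbit pairs abound). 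Your second paragraph is vacuous for such pairs, since $(\bar{\omega}_i,\bar{\omega}_i)=+1$, and therefore your third paragraph --- whose entire strategy is to push the minimum of the inner product back to the initial cluster --- cannot close this case: for $i=j$ the ``short element'' is the identity, where the desired inequality fails. Note also that your monotonicity engine is emptier than it looks: since $(\bar{\omega}_i,\frake_k)=0$ for $k\ne i$, the only letters of $w$ that change $(\bar{\omega}_i,\cdot)$ at all are occurrences of $s_{\frake_i}$, and controlling the sign and size of $(\frake_i,w\bar{\omega}_j)$ is precisely the root-coefficient analysis in which Maxwell actually uses the level $\le 2$ hypothesis. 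Worse for your plan, the off-diagonal case on which you concentrate is in fact the easy half and needs no induction: in the geometric representation of any Coxeter group, a dominant vector satisfies $\omega_j-w\omega_j=\sum_k c_k\frake_k$ with all $c_k\ge 0$, hence $(\omega_i,w\omega_j)=g^{ij}-c_i\le g^{ij}$ for $i\ne j$, so all orbit inequalities with $i\ne j$ follow immediately from the initial-cluster ones. The entire difficulty of the sufficiency direction sits in the diagonal case, which your outline does not touch.

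The second gap is the one you acknowledge but do not resolve: level $\le 2$ quantifies over deletions of \emph{arbitrary} pairs of vertices, whereas crossing of spheres only directly detects pairs of real weights $i,j\in J$; bridging that mismatch is part of Maxwell's argument, not a detail one can defer. Your necessity sketch also needs the sign of $g^{ij}$ handled separately: by Lemma \ref{touch} the failure of $(\bar{\omega}_i,\bar{\omega}_j)\le -1$ with $(g^{ij})^2\ge g^{ii}g^{jj}$ and $g^{ij}>0$ corresponds to nested balls, not transversally crossing spheres, so the magnitude computation alone does not produce the contradiction you want under the paper's definition of a packing. As it stands, the proposal is a plan whose two hardest steps are labelled as obstacles rather than proved, and whose reduction omits an essential (indeed, the essential) family of sphere pairs; it is not a proof of the theorem.
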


If $P$ is a quasi-Lanner Coxeter polytope, the Kleinian group $\Gamma_P$ has a fundamental polyhedron of finite volume. It follows that $\Gamma_P$ is a Kleinian group of the first kind (\cite{Ratcliffe}, Theorem 12.2.13). Thus any point on the absolute is its limit point. The stabilizer $\Gamma_{P,i}$ of each $S(\bar{\omega}_i)$ is the reflection group of the Coxeter group defined by the Coxeter diagram of level 1, hence the limit set of $\Gamma_{P,i}$ is equal to $S(\bar{\omega}_i)$. Thus the assumption (A1) is satisfied.

Let $\frakP(P)$ be a non-degenerate Boyd-Maxwell sphere packing and let  $\bfk = (k_1,\ldots,k_{n+2})$ be the vector of the curvatures of the spheres $S(v_i)$ in its cluster $(S(v_1),\ldots,S(v_{n+2}))$. 

\begin{theorem}\label{soddy} Let $G(P)$ be the Gram matrix of $P$. Then
 \beq\label{gdes}
 {}^t\bfk\cdot G(P)\cdot \bfk = 0.
 \eeq
 \end{theorem}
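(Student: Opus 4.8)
The plan is to reduce the identity to a single structural fact: the point at infinity of $\hat{E}^n$ is represented by an \emph{isotropic} vector, and the oriented curvature of a sphere is exactly its inner product with that vector. The relation \eqref{gdes} then becomes a transcription of the isotropy of this vector in the Coxeter basis $\frake_1,\dots,\frake_{n+2}$.

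First I would identify the distinguished vector. In the coordinates of \eqref{fund2} the parametrization $\bbP^n_\bbR\dasharrow Q$ contracts the hyperplane $\{x_0=0\}$ to the single point $\frakf:=[0,\dots,0,1]$, which represents $\infty\in\hat{E}^n$; since $q(\frakf)=2\cdot 0\cdot 1+0=0$, the vector $\frakf$ is isotropic, $(\frakf,\frakf)=0$. Reading off \eqref{sphere1}, a normalized sphere vector $v=(a_0,\dots,a_{n+1})$ has oriented curvature $a_0$, and the symmetric form attached to \eqref{fund2} gives $(v,\frakf)=a_0$; hence the oriented curvature of $S(v)$ equals $(v,\frakf)$. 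In particular $k_i=(v_i,\frakf)$ for the cluster vectors $v_i=\bar{\omega}_i$.

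Next I would use the duality $(\omega_i,\frake_j)=\delta_{ij}$ defining the $\omega_i$. Since $\frake_1,\dots,\frake_{n+2}$ is a basis of $\bbR^{n+1,1}$ in the non-degenerate case, I expand $\frakf=\sum_j c_j\frake_j$ and recover the coefficients by pairing against the dual basis, $c_j=(\frakf,\omega_j)$, which is exactly the oriented curvature $k_j$ of the $j$-th cluster sphere. Using $g_{ij}=(\frake_i,\frake_j)$ this gives
\[
0=(\frakf,\frakf)=\sum_{i,j}c_ic_j\,(\frake_i,\frake_j)={}^t\bfk\cdot G(P)\cdot\bfk,
\]
which is \eqref{gdes}. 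Equivalently, since the dual basis $\{\omega_i\}$ has Gram matrix $G(P)^{-1}$, the identity expresses precisely that the coordinate vector of the isotropic vector $\frakf$ in the basis $\{\frake_i\}$ is null for $G(P)$.

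The only step needing care is the normalization: the curvature is defined via $(v_i,v_i)=1$, i.e.\ via $\bar{\omega}_i=\omega_i/\sqrt{g^{ii}}$, whereas the coefficients $c_j=(\frakf,\omega_j)$ use the unnormalized $\omega_j$. One must check that the convention for the cluster curvatures makes $c_j=k_j$ on the nose; otherwise the quadratic form emerges as ${}^t\bfk\,D\,G(P)\,D\,\bfk$ with $D=\mathrm{diag}(\sqrt{g^{ii}})$, which is the same assertion after a harmless positive rescaling of each entry of $\bfk$. The genuine mathematical content is only the isotropy $(\frakf,\frakf)=0$ together with the $\{\frake_i\}$--$\{\omega_i\}$ duality.
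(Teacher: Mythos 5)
Your argument is correct at its core and, once unwound, rests on the same mechanism as the paper's proof, but it is packaged more conceptually. The paper forms the matrix $X$ whose columns are the cluster vectors, uses $\Gamma_P$-invariance to get ${}^tX\cdot J_{n+2}\cdot X = G(P)^{-1}$, inverts this to $X\cdot G(P)\cdot {}^tX = J_{n+2}$, and reads off the $(1,1)$ entry, which vanishes because $q$ in \eqref{fund2} has no $t_0^2$ term. Your proof isolates exactly what that vanishing entry means: the linear functional extracting the curvature coordinate is pairing with the isotropic vector $\frakf=[0,\ldots,0,1]$ representing $\infty\in\hat{E}^n$, so \eqref{gdes} is literally the statement $(\frakf,\frakf)=0$ written out in the basis $(\frake_1,\ldots,\frake_{n+2})$ via the duality $(\omega_i,\frake_j)=\delta_{ij}$. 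What your route buys is transparency: no matrix inversion, and the geometric role of the point at infinity is explicit. What the paper's route buys is the full matrix identity $X\cdot G(P)\cdot{}^tX = J_{n+2}$, of which \eqref{gdes} is a single entry.

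Two caveats. First, a genuine but easily repaired gap: you treat only the initial cluster $v_i=\bar{\omega}_i$, while the theorem concerns an arbitrary cluster $v_i=\gamma(\bar{\omega}_i)$, $\gamma\in\Gamma_P$; this is the case the paper explicitly handles, and it is what makes the Descartes-type relation hold throughout the packing rather than for one cluster. The fix inside your framework is one line: $k_j=(\gamma(\bar{\omega}_j),\frakf)=(\bar{\omega}_j,\gamma^{-1}(\frakf))$, and $\gamma^{-1}(\frakf)$ is again isotropic, so your expansion argument applies verbatim with $\gamma^{-1}(\frakf)$ in place of $\frakf$. Second, the normalization issue you flag is real and not quite ``harmless'': with the geometric curvatures $k_j$, the identity your argument produces is ${}^t\bfk\cdot D\cdot G(P)\cdot D\cdot\bfk=0$ with $D=\mathrm{diag}(\sqrt{g^{ii}})$, and this coincides with \eqref{gdes} on the nose precisely when the $g^{ii}$ are all equal, as they are in the Apollonian and Boyd examples. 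But the paper's own proof makes the identical silent identification of the Gram matrix $((\bar{\omega}_i,\bar{\omega}_j))$ of the normalized weights with $G(P)^{-1}$, so your proposal is no weaker on this point, and it is more candid about where the subtlety sits.
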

 
 \begin{proof} Let $J_{n+2}$ be the matrix of the symmetric bilinear form defined by the fundamental quadratic form $q$. It satisfies $J_{n+2} = J_{n+2}^{-1}$. Let $X$ be the matrix whose $j$th column is the vector of  coordinates of the vector  $v_j$. Recall that the first coordinate of each vector $v_j$ is equal to the curvature of the sphere $S(v_j).$ Let 
 $v_i = \gamma(\bar{\omega}_i)$ for some $\gamma\in \Gamma_P$.  We have
 $$(v_i,v_j) = (\gamma(\bar{\omega}_i),\gamma(\bar{\omega}_j)) =  (\bar{\omega}_i,\bar{\omega}_i).$$
 Thus,
$${}^tX\cdot J_{n+2}\cdot X = G(P)^{-1},$$
hence
$${}^tX^{-1}\cdot G(P)^{-1}\cdot X^{-1} = J_{n+2}.$$
Taking the inverse, we obtain
$$X\cdot G(P)\cdot {}^tX  = J_{n+2}^{-1} = J_{n+2}.$$
 The first entry $a_{11}$ of the matrix in the right-hand side is equal to zero. Hence 
$${}^t\bfk\cdot G(P)\cdot\bfk = 0,$$
\end{proof}

It is clear that  we have the following possibilities for a cluster in $\frakP(P)$.

\xy
(0,25)*{};(0,-25)*{};
(-15,0)*{};(0,0)*\cir<40pt>{};
(-15,0)*{};(2,2)*\cir<10pt>{};
(-15,0)*{};(7,7)*\cir<10pt>{}; 
(-15,0)*{};(-5,-5)*\cir<12pt>{};
(0,-20)*{(-,+,\ldots,+)}; 
(30,5)*\cir<20pt>{};
(42,2)*\cir<10pt>{};
(47,7)*\cir<10pt>{}; 
(35,-10)*\cir<10pt>{};
(35,-20)*{(+,+,\ldots,+)};
(60,15)*{};(60,-15)*{}**\dir{-};
(65,7)*\cir<10pt>{};(67,0)*\cir<10pt>{};
(65,7)*\cir<10pt>{};(67,-10)*\cir<10pt>{};
(65,-20)*{(0,+,\ldots,+)};
(85,15)*{};(85,-15)*{}**\dir{-};
(100,15)*{};(100,-15)*{}**\dir{-};
(90,7)*\cir<10pt>{};(90,0)*\cir<10pt>{};(93,-10)*\cir<10pt>{};
(95,-20)*{(0,0,+,\ldots,+)};
\endxy

Assume $n\ge 2$. Suppose $\frakP(P)$ is a non-degenerate Boyd-Maxwell sphere packing. Let 
$\bar{\omega}_1,\ldots,\bar{\omega}_{n+2}$ be defined as \eqref{ff}. Then the hyperplanes $H(\bar{\omega}_i)$ bound a convex polyhedron with no proper vertices  on the absolute. Let us denote it by $P^\perp$. Since $\frakP(P)$ is a sphere packing, no two bounding hyperplanes of $P^\perp$ have a common point in the absolute. By Lemma \ref{touch}, 
\beq
 (\bar{\omega}_i, \bar{\omega}_j)\le -1, \quad i\ne j.
\eeq
Thus any two bounding hyperplanes of $P^\perp$ either diverge or have the dihedral angle equal to $0$. In particular, we obtain that $P^\perp$ is a Coxeter polyhedron and hence its reflection group 
$\Gamma_P^\perp:= \Gamma_{P^\perp}$ is a Kleinian group. The  Coxeter diagram of $P^\perp$ has either solid or dotted edges. As a  non-weighted graph, it is a complete graph $K_{n+2}$ with $n+2$ vertices. It is clear that it is of level $l \le 2$ if and only if 
$n\le 3$. In this case we can define the \emph{dual Boyd-Maxwell sphere packing} 
$$\frakP(P)^\perp:= \frakP(P^\perp).$$

Note that the property $(\frake_i,\bar{\omega}_k) = 0$ for $i\ne k$ is interpreted in terms of spheres as the property that the spheres $S(\frake_i)$ and $S(\bar{\omega}_j)$ are orthogonal to each other (see, for example, \cite{DH}, Proposition 7.3). 
This implies that the dual sphere packing can be defined by a cluster that consists of spheres orthogonal to all 
spheres of a cluster defining $\frakP(P)$ except one.

Let $A$ be the matrix of a transformation from $\Gamma_{P^\perp}$ in a basis $(\omega_1,\ldots,\omega_{n+1})$. Then 
$${}^tA\cdot G(P)^{-1}\cdot A = G(P).$$
Taking the inverse, we obtain
\beq\label{inv}
A^{-1}\cdot G(P)\cdot {}^tA^{-1} = G(P).
\eeq
This shows that under the homomorphism $A\to {}^tA^{-1}$, the group $\Gamma_P^\perp$ becomes isomorphic to a subgroup of the orthogonal group of the quadratic form defined by the matrix $G(P)$. 

The group $\Gamma_P$ is generated by the reflections $s_{\frake_i}$ that act on the dual basis 
$(\omega_1,\ldots,\omega_{n+2})$ by the formula
\beq\label{refl1}
s_{\frake_i}(\omega_j) = \omega_j-2(\frake_i,\omega_j)\frake_i = 
\omega_j-2\delta_{ij}\frake_i = \omega_j-2\delta_{ij}\sum_{k=1}^{n+2}g_{ki}\omega_k,
\eeq
where $\delta_{ij}$ is the Kronecker symbol. This gives explicitly the action of $\Gamma_P$ on the clusters of the sphere packing and also on the set of their curvature vectors. 

The group $\Gamma_{P^\perp}$ is generated by the reflections 
$s_{\omega_i} = s_{\bar{\omega}_i}$ that act on the basis 
$(\frake_1,\ldots,\frake_{n+2})$ by the  formula
\beq\label{refl2}
s_{\omega_i}(\frake_j) = \frake_j-2(\bar{\omega}_i,\frake_j)\bar{\omega}_i =
 \frake_j-\frac{2\delta_{ij}}{g^{ii}}\omega_i = \frake_j-\frac{2\delta_{ij}}{g^{ii}}\sum_{k=1}^{n+2}g^{ki}e_k.
 \eeq

\begin{example} This is the most notorious and  widely discussed beautiful example of a Boyd-Maxwell sphere packing (see, for example, \cite{Lagarias1}, \cite{Lagarias2}, \cite{Lagarias3},\cite{Lagarias4},\cite{Sarnak}).

We start with a set of pairwise touching non-degenerate $n$-spheres $S(v_1),\ldots,S(v_{n+2})$ with positive radii that 
touch each other. By Lemma \ref{touch}, the  Gram matrix $G = ((v_i,v_j))$ has 1 at the diagonal and $-1$ off the diagonal. Thus,
$$G = \circulant(1,-1,\ldots,-1),$$
where $\circulant(\alpha_1,\ldots,\alpha_{n+2})$ denotes the circulant matrix with the first row $(\alpha_1,\ldots,\alpha_{n+2})$. 
It is easy to compute the inverse of $G$ to obtain
\beq\label{apoldual}
G^{-1} = \frac{1}{2n}\circulant(n-1,-1,\ldots,-1).
\eeq
Assume $n \ge 2$. Then  
$$\frac{2n}{n-1}G^{-1} = \circulant(1,\frac{1}{1-n},\ldots,\frac{1}{1-n}).$$
It is equal to the Gram matrix of a Coxeter polytope $P$ if and only if $n =2, 3$. The group $\Gamma_P$ is called the \emph{Apollonian group} and will be denoted by $\Ap_n$. It is a Kleinian 
group only for $n = 2,3$. 
 We call $P$ an 
\emph{Apollonian polyhedron}. Its Coxeter diagram is equal to the 
complete graph $K_4$ with solid edges if $n = 2$ and a complete graph $K_5$ with simple edges if $n = 3$. In each case 
the Coxeter diagram is of level 2. Thus, $P$ defines a Boyd-Maxwell sphere packing in dimensions $n = 2$ or $3$. 
In the case $n = 2$, we see that all circles in the cluster defined by the  vectors 
$\bar{\omega}_1,\ldots,\bar{\omega}_{n+2}$  mutually touch each other.  So, all clusters in this packing define a set of $n+2$ mutually touching circles. Any two circles in the packing are either disjoint or touch each other.

We have $G(P)^{-1} = \frac{n-1}{2n}G$, thus $P^\perp$ has the Gram matrix equal to $G$. It follows that $P^\perp$ is 
always a Coxeter polyhedron with Coxeter diagram equal to a complete graph $K_{n+2}$ with solid edges. It defines a 
sphere packing only if $n\le 3$. Clearly, it is non-degenerate, and hence by Theorem \ref{maxwell}, it is maximal. If $n = 2,3$, it is the dual to the Apollonian packing. Its clusters consist of $n+2$ mutually tangent $n$-spheres. If $n = 2$, the dual of an Apollonian circle packing is an Apollonian circle packing. Its  clusters are orthogonal to clusters of the original packing as shown on Figure 1 below. The group $\Ap_n^\perp$ is called the \emph{dual Apollonian group}.

\begin{figure}[ht]
\begin{center}
\includegraphics[scale=.8]{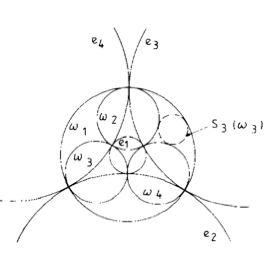}
\caption{Dual Apollonian circle clusters (from \cite{Maxwell})}
\end{center}
\end{figure}

Let $H_{\bar{\omega}_1},\ldots,H_{\bar{\omega}_{n+2}}$ be the bounding hyperplanes of  an Apollonian Coxeter polyhedron and $s_{\frake_i}$ be the corresponding reflections generating the Apollonian group $\Ap_n$. Let $S(\frake_1),\ldots, S(\frake_{n+2})$ be the cluster of the dual Apollonian sphere packing. The formula \eqref{refl1} specializes to give $s_{\frake_i}(\bar{\omega}_j) = \bar{\omega}_j$, and 
$$s_{\frake_i}(\bar{\omega}_i) = -\bar{\omega}_i+\frac{2}{n-1}\sum_{j\ne i}\bar{w}_j.$$
For example, $s_{\frake_1}$ is represented in the basis $(\bar{\omega}_1,\ldots,\bar{\omega}_{n+2})$ by the matrix
$$A_1 = \bsm-1&0&0&\ldots&0\\
\frac{2}{n-1}&1&0&\ldots&0\\
\vdots&\vdots&\vdots&\vdots&\vdots\\
\frac{2}{n-1}&0&0&\ldots&1\esm.
$$
So, the Apollonian group is generated by $n+2$ matrices $A_1,\ldots,A_{n+2}$ of this sort.  
 
Formula \eqref{refl2} shows that the dual Apollonian group $\Ap_n^\perp$ acts on the Apollonian packing by acting on spheres $S(\frake_i)$ via the reflections $s_{\bar{\omega}_i}$. We have $s_{\bar{\omega}_i}(\frake_j) = \frake_j$ if $j\ne i$ and 
$$s_{\bar{\omega}_i}(\frake_i) =  
-\frake_i+2\sum_{k=1, k\ne i}^{n+2}\frake_j$$
For example, $s_{\bar{\omega}_1}$ is represented by the matrix
$$B_1 = \bsm-1&2&0&\ldots&2\\
0&1&0&\ldots&0\\
\vdots&\vdots&\vdots&\vdots&\vdots\\
0&0&0&\ldots&1\esm.$$

The equation from  Theorem \ref{soddy} becomes
\beq\label{des1}
n(k_1^2+\cdots+k_{n+2}^2) -(k_1+\cdots+k_{n+2})^2 = 0.
\eeq
It is known as \emph{Descartes's equation} or \emph{Soddy's equation}\footnote{From a ``poem proof'' of the theorem in the case 
$n = 2$ in  ``Kiss Precise'' by Frederick Soddy published in Nature, 1930:

Four circles to the kissing come. /
The smaller are the bender. /
The bend is just the inverse of / The distance from the center. /
Though their intrigue left Euclid dumb / ThereÕs now no need for rule of thumb. /
Since zero bendÕs a dead straight line / And concave bends have minus sign, /
The sum of the squares of all four bends / Is half the square of their sum.}

Using \eqref{inv}, we see that $\Ap_n^\perp$ acts on the solutions of the equation \eqref{gdes} by the 
contragradient representations. It is generated by the matrices ${}^tA^{-1} = {}^tA_1,\ldots,{}^tA_{n+2}^{-1}= {}^tA_{n+2}$. However, if $n\ne 2$, it does not leave the Apollonian packing invariant.

Assume $n > 1$. Consider a solution $(k_1,\ldots,k_{n+2})$ of the Descartes' equation which we rewrite  in the form
\beq\label{des2}
\sum_{i=1}^{n+2} k_i^2 - \frac{2}{n-1}\sum_{1\le i\le j\le n+2}k_ik_j = 0.
\eeq
Thus, $k_{n+2}$ satisfies the quadratic equation
$$t^2-\frac{2t}{n-1}\sum_{i=1}^{n+1}k_i-\frac{2}{n-1}\sum_{1\le i<k \le n+1}^{n+2}k_i = 0.$$
It expresses the well-know fact (the \emph{Apollonian Theorem}) that, given $n+1$ spheres touching each other, there are two more spheres that touch them. Thus, starting from an Apollonian cluster $(S_1,\ldots,S_{n+2})$, we get a new Apollonian cluster $(S_1,\ldots,S_{n+1},S_{n+2}')$ such that the curvatures of $S_{n+2}$ and $S_{n+2}'$ are solution of the above quadratic equation. The curvatures of $S_{n+2}'$ is equal to 
$$k_{n+2}' = -k_{n+2}+\frac{2}{n-1}\sum_{i=1}^{n+1}k_i.$$
So the new cluster $(S_1,\ldots,S_{n+1},S_{n+2}')$ is the cluster obtained from the original one by applying the transformation $s_{\frake_{n+2}}$.

\begin{figure}[ht]
\begin{center}
\includegraphics[scale=.5]{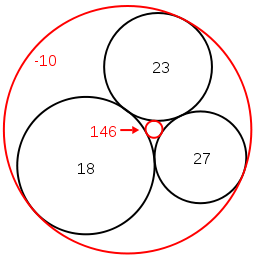}
\caption{Apollonina circle cluster with curvature vector $(-10,18,23,27)$}
\label{fig2}
\end{center}
\end{figure}

If we start with a cluster as in the picture, then all circles will be inclosed in a unique circle of 
largest radius, so our circle packing will look as in  following Figure \ref{fig3}.

\begin{figure}[ht]
\begin{center}
\includegraphics[scale=.7]{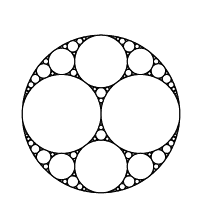}
\caption{Apollonina circle packing}
\label{fig3}
\end{center}
\end{figure}

Another possibility is when we start with a cluster of three circles touching a line. Then, the circle packing will look like in the following Figure \ref{fig4}.

\begin{figure}[ht]
\begin{center}
\quad \includegraphics[scale=.5]{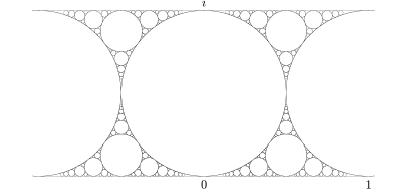}
\caption{Apollonina circle packing in a band}
\label{fig4}
\end{center}
\end{figure}

\end{example}

The following example is taken from \cite{Boyd2}.

 \begin{example}\label{boyd} We take $P$ with the Gram matrix
$$G(P) = \circulant(1,-1,0,-1).$$
It is a Coxeter polyhedron with the Coxeter diagram

\xy (-50,10)*{};(-30,-5)*{};
@={(-10,0),(0,0),(10,0),(20,0)}@@{*{\bullet}};
(-10,0)*{};(20,0)*{}**\dir{-};
(-5,2)*{\infty};(5,3)*{\infty};(15,3)*{\infty};
\endxy
The dual polyhedron $P^\perp$ has the Gram matrix $G(P)^{-1}$
$$G(P)^{-1} = \frac{1}{2}\circulant(1,-1,-2,-1).$$
\end{example}

\begin{example} Let us consider the exceptional case of Apollonian packings when $n = 1$ (this case was briefly mentioned in \cite{Lagarias4}). Here $0$-dimensional spheres are sets of two or one points in $\bbP^1(\bbR)$ that we can identify with the boundary of the unit disk model of the hyperbolic plane $\bbH^2$. We start with three $0$-dimensional spheres touching each other. This means that each pair has a common point. Let $p_{12},p_{13},p_{23}$ be the common points. We use the Poincar\'e model of $\bbH^2$. Then the corresponding hyperplanes $H(\omega_1), H(\omega_1),H(\omega_1)$ are three geodesics  joining pairs of these points. They form a \emph{ideal triangle} with vertices on the boundary.  

\begin{figure}[ht]
\begin{center}
\includegraphics[scale=.2]{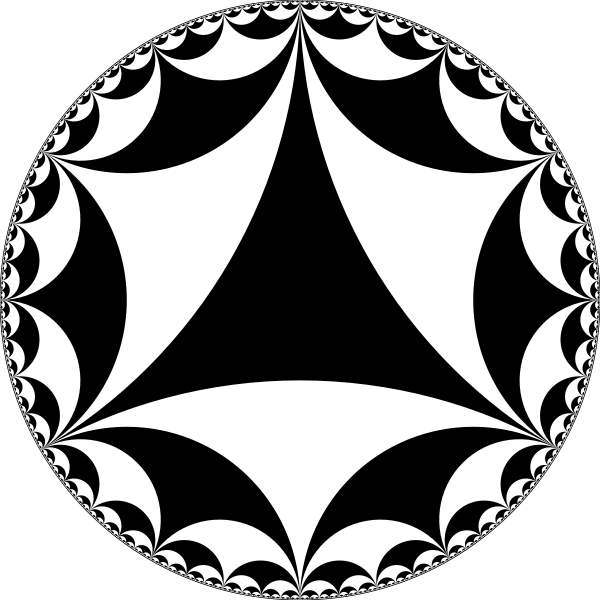}
\caption{Ideal triangle}
\end{center}
\end{figure}

The Apollonian group is not defined in this case. However, the dual Apollonian group $\Ap_1^\perp$ is defined. It is 
a Coxeter group with Coxeter diagram equal to the complete graph $K_3$ with solid edges. It is a quasi-Lanner 
Coxeter group generated by the reflections in sides of the ideal triangle. 

 The fundamental polyhedron of  $\Ap_1^\perp$ is equal to the closure of the interior of the triangle. Its  Gram matrix is equal to $\circulant(1,-1,-1)$. Since it is of finite volume, the group is a Fuchsian group of the first kind with 3 cusps. The limit set of $\Ap_1^\perp$ is the whole circle. However, the packing is not strict since the interiors of our $0$-dimensional spheres are one of the two intervals in $\bbP^1(\bbR)$ with end points equal to the sphere. Obviously we have a nested family of intervals. The set of $0$-dimensional spheres in the sphere $\Ap_1^\perp$-packing is a proper subset of 
$\Lambda(\Ap_1^\perp)$. It consists of cusp points and is countable.

The Descartes equation becomes 
\beq\label{desn=1}
k_1k_2+k_1k_3+k_2k_3 = 0.
\eeq

\end{example}

\section{Integral packings}
Let $K \subset \bbR$ be a  totally real field  of algebraic numbers of degree $d = [K:\bbQ]$ and $\frako_K$ be its ring of integers. A free $\frako_K$-submodule $L$ of rank $n+2$ of  $\bbR^{n+1,1}$ is called a $\frako_K$-lattice. Let $(\frake_1,\ldots,\frake_{n+2})$ be a basis of $L$. Assume that the entries of its Gram matrix $G = ((\frake_i,\frake_j)) = (g_{ij})$ belong to $\frako_K$
 Let $\sigma_i:K\hookrightarrow \bbR, 
i = 1,\ldots,d-1,$ be the set of non-identical embeddings of $K$ into $\bbR$. We assume additionally that the matrices 
$G^{\sigma_i} = (\sigma_i(g_{ij}))$ are positive definite. Let 
$$f = \sum_{1\le i,j\le n+2}g_{ij}t_it_j$$ 
be the quadratic form defined by the matrix $G$ and $\Or(f,\frako_K)$ be the subgroup of 
$\GL_{n+2}(\frako_K)$ of transformations 
that leave $f$ invariant. The group $\Or(f,\frako_K)$ is a discrete subgroup of $\Or(\bbR^{n+1,1})$ with a fundamental polyhedron of finite volume, compact if $f$ does not represent zero. By passing to the projective orthogonal group, we will view such groups as Kleinian subgroups of $\Iso(\bbH^{n+1})$. A subgroup of $\Iso(\bbH^{n+1})$ which is commensurable with a subgroup of the form $\PO(f,\frako_K)$  is called \emph{arithmetic}. Two groups $\PO(f',\frako_K')$ and $\PO(f,\frako_K)$ are commensurable if 
and only if $K = K'$ and $f$ is equivalent to $\lambda f'$ for some positive $\lambda\in K$. In particular, any subgroup of finite index in $\PO(f,\frako_K)$ is an arithmetic group. 

We will be interested in the special case of an \emph{integral lattice} where $K = \bbQ$. A Kleinian group 
$\Gamma$  in $\bbH^{n+1}$ is called \emph{integral} if it is commensurable with a subgroup of $\PO(f,\bbZ)$ for some integral quadratic form $f$ of signature $(n+1,1)$.
We will be dealing mostly with non-arithmetic  geometrically finite Kleinian groups. However, it follows from \cite{Benoist}, Proposition 1, that such a group is always Zariski dense in $\textrm{PSO}(n+1,1)$ or $\PO(n+1,1)$ if it acts irreducibly in $\bbH^{n+1}$. In terminology due to P. Sarnak, $\Gamma$ is a \emph{thin group}.

We will use the usual terminology of the theory of integral quadratic forms, or more abstractly, 
\emph{integral quadratic lattices}, free abelian groups $L$ of finite rank equipped with a symmetric bilinear form with values in $\bbZ$. By tensoring with $\bbR$, we embed $L$   in $L_\bbR:=L\otimes \bbR$.  We assume that the quadratic form of $L$ extends to  a quadratic form on $L_\bbR$ of signature $(n+1,1)$, this will identify $L$ with  an integral lattice in the previous sense. The set $L^\vee$ of all vectors in $L_\bbR$ such that $(x,v)\in \bbZ$ for all $x\in L$ is called the \emph{dual lattice}. It is a free abelian group spanned by the dual basis of a basis of $L$ in $L_\bbR$. The Gram matrix of this basis is the inverse of the Gram matrix of the basis of $L$. The quotient group $L^\vee/L$ is called the 
\emph{discriminant group} of $L$. Its order is equal to the minus of the 
determinant of the Gram matrix of any basis of $L$. Note that $L^\vee \subset L_\bbR$ is not, in general, an integral lattice. 
However, it becomes integral if we multiply its quadratic form by the exponent of the discriminant group of $L$.  We denote by $\Or(L)$ the orthogonal group of $L$, the subgroup of $\GL(L)$ that preserves the bilinear form.  A reflection isometry of $\bbR^{N}$ defined by a vector $v\in L$ belongs to $\Or(L)$ if and only if 
$\frac{2(v,x)}{(v,v)}\in \bbZ$ for all $x\in L$.

\begin{remark}\label{new} Let $\Gamma$ be an integral  Kleinian group of isometries of $\bbH^n$ of finite covolume, for 
example, the orthogonal group of some integral quadratic lattice $L$. It is obviously geometrically finite, however, for $n > 3$ it may contain finitely generated subgroups which are not geometrically finite. In fact, for any lattice $L$ of rank $\ge 5$  that contains a primitive sublattice defined by the quadratic form $q = -x_0^2+x_1^2+x_2^2+x_3^2$, the orthogonal group $\Or(L)$ contains finitely generated but not finitely presented subgroups (see \cite{KPV}).\footnote{I thank Boris Apanasov who informed me about this fact.}
\end{remark}

A sphere packing $\frakP = (S_i)_{i\in I}$ is called \emph{integral} if the following conditions are satisfied.
\begin{itemize}
\item[(i)] The corresponding norm one vectors $v_i$  in $\bbR^{n+1,1}$ span an integral  lattice $L$;
\item[(ii)] There exists a positive integer $\lambda$ such that $\lambda(v_i,v_j)\in \bbZ$ for all $i,j\in I$. The smallest such $c$ is called the \emph{exponent}.
\item[(iii)] The curvatures of spheres $S_i$ are integers.
\end{itemize}

 After multiplying the quadratic form of $L$, the restriction of the fundamental quadratic form to $L$, by the exponent we obtain an integral quadratic lattice $\integ(L)$. If $\Gamma$ is a Kleinian group that leaves an integral sphere packing invariant, then it  obviously leaves $\integ(L)$ invariant and becomes a subgroup of $\Or(\integ(L))$.

It follows from formula \eqref{refl1} that a Boyd-Maxwell sphere packing defined by a Coxeter polytope $P$ with Gram matrix $G(P) = (g_{ij})$ is integral if and only if the curvatures of the spheres from the initial cluster are integers and $g_{ij}\in \bbQ$.  The quadratic lattice is defined by the matrix $eG(P)$, where $e$ is the smallest 
positive integer such that $eg_{ij}\in \bbZ$.  
 
\begin{example} Let $P$ be an Apollonian polyhedron in $\bbH^{n+1}, n > 1$. It is defined by the matrix 
$\circulant(1,\frac{1}{1-n},\ldots,\frac{1}{1-n})$ whose inverse is equal to $\frac{n-1}{2n}\circulant(1,-1,\ldots,-1)$. 
Recall that it is a Coxeter polyhedron only for $n = 2,3$. The Apollonian packing is an integral Coxeter packing with 
exponent $1$. The Apollonian group $\Ap_n$ becomes a subgroup of the orthogonal group $\Or(\calAp(n))$, where $\calAp(n)$ is an integral quadratic lattice of rank $n+2$ with the quadratic form
\beq\label{aplat}
q_n = \sum_{i=1}^{n+2}t_i^2-2\sum_{1\le i< j\le n+2}t_it_j.
\eeq
Recall that an integral quadratic lattice is called \emph{even} if all its values are even integers, it is called \emph{odd} otherwise. Obviously, the lattice $\calAp(n)$ is odd. For any lattice $L$, we denote by $L^{\ev}$ its largest even sublattice. Obviously, $\Or(L)\subset \Or(L^{\ev})$. Let us find an orthogonal decomposition of the lattice $\calAp(n)^{\ev}$. 

The quadratic form \eqref{aplat} is given in a basis $\bar{\omega}_1,\ldots,\bar{\omega}_{n+2}$ of $\calAp(n)$. Consider a new basis $f_1,\ldots,f_{n+2}$  formed by the vectors 
 $\bar{\omega}_1,\bar{\omega}_1+\bar{\omega}_2,-\bar{\omega}_1-\bar{\omega}_3,\bar{\omega}_1+\bar{\omega}_2+\bar{\omega}_3-\bar{\omega}_4,\bar{\omega}_4-\bar{\omega}_5,\ldots,\bar{\omega}_{n+1}-\bar{\omega}_{n+2}$. The Gram matrix of this basis is the following matrix.
$$\bsm 1&0&0&0&0&0&0&\ldots&0&0\\
0&0&2&0&0&0&0&\ldots&0&0\\
0&2&0&0&0&0&0&\ldots&0&0\\
0&0&0&4&-2&0&0&\ldots&0&0\\
0&0&0&-2&4&-2&0&\ldots&0&0\\
\vdots&\vdots&\vdots&\vdots&\vdots&\vdots&\vdots&\vdots&\vdots&\vdots\\
0&0&0&0&0&0&0&\ldots&-2&4\esm.$$
This shows that, for any $n > 1$, 
\beq\label{mn}
\calAp(n)\cong \la 1\ra \oplus \sfU(2)\oplus \sfA_{n-1}(2).
\eeq
Here we use the standard notation from the theory of integral quadratic lattices:$\sfU$ is the integral hyperbolic plane, $\sfA_n$ is the even positive definite root lattice defined by the Cartan matrix of type $A_n$, $\la k\ra$ is the lattice of rank 1 spanned by a vector $v$ with $(v,v)= k$, and $M(t)$ denotes the quadratic lattice $M$ with its integral quadratic form multiplied by  a rational number $t$.

We have 
\beq\label{mn2}
\calAp(n)^{\ev} =  \sfA_1(2) \oplus \sfU(2)\oplus \sfA_{n-1}(2) \cong (\sfA_1 \oplus \sfU\oplus \sfA_{n-1})(2).
\eeq
The discriminant group of $\calAp(n)$ (resp. $\calAp(n)^{\ev}$) is isomorphic to 
$(\bbZ/2\bbZ)^{n}\oplus  \bbZ/2n\bbZ$ (resp. $(\bbZ/2\bbZ)^{n}\oplus  \bbZ/2n\bbZ\oplus \bbZ/4\bbZ$).

Next we consider the dual Apollonian sphere packing. Its lattice is the dual lattice of $\calAp(n)^\vee$ defined by the quadratic form from the left-hand side of \eqref{des1} multiplied by $\frac{n-1}{2n}$. 
$$q_n^\perp: = (n-1)\sum_{i=1}^{n+2}t_i^2-2\sum_{1\le i< j\le n+2}t_it_j.$$
Its exponent is equal to $2n$ and coincides with the exponent of the discriminant group of $\calAp(n)$. We have 
$$
\calAp(n)^\perp:= \integ(\calAp(n)^\vee) = \calAp(n)^\vee(2n) \cong (\la 1\ra \oplus \sfU(1/2)\oplus \sfA_{n-1}^\vee(1/2))(2n)$$
$$=\la 2n\ra \oplus \sfU(n)\oplus \sfA_{n-1}^\vee(n).
$$
The lattice $\sfA_{n-1}^\vee$ is generated by $\sfA_{n-1}$ and a vector 
$$\beta =\frac{1}{n}(\omega_1+2\omega_2+\cdots+(n-1)\omega_{n-1}),$$
where $\omega_1,\ldots,\omega_{n-1}$ is the standard basis defined by simple roots. In the basis $(\omega_2,\ldots,\omega_{n-1},\beta)$, the matrix of $\sfA_{n-1}^\vee(n)$ is equal to 
\beq\label{mat1}
\bsm 2n&-n&0&0&\ldots&0&0&0&0\\
-n&2n&-n&0&\ldots&0&0&0&0\\
0&-n&2&-n&\ldots&0&0&0&0\\
\vdots&\vdots&\vdots&\vdots&\vdots&\vdots&\vdots&\vdots&\vdots\\
0&0&0&0&\ldots&-n&2n&-n&0\\
0&0&0&0&\ldots&0&-n&2n&n\\
0&0&0&0&\ldots&0&0&n&n-1\esm
\eeq
Thus we obtain that $\Ap_n^\perp$ is even if $n$ is odd, and, for $n$ even
\beq\label{lat2}
(\calAp(n)^\perp)^\ev \cong \la 2n\ra\oplus \sfU(n)\oplus (A_{n-1}^\vee(n))^\ev,
\eeq
where $(A_{n-1}^\vee(n))^\ev$ is defined by the matrix \eqref{mat1}  in which the submatrix $\bsm 2n&n\\
n&n-1\esm$ is replaced with $\bsm 2n&2n\\
2n&4n-4\esm$.

Note that the Apollonian packing in dimension $n = 1$ exists but it is not a Boyd-Maxwell sphere packing. We have
$$\calAp_1 \cong \la 1\ra \oplus U(2), \quad \calAp_1^{\ev} \cong (\sfA_1\oplus U)(2).$$
The lattice $\calAp_1^{\ev}(1/2) = \sfA_1\sfU$ is from Nikulin's list of integral even hyperbolic quadratic lattices of rank 3 and signature $(2,1)$ such that the subgroup of the orthogonal group generated by reflections $s_v, (v,v) = 2$ is of finite index (a 2-reflexive lattice) \cite{Nikulin2}. Each such lattice is a sublattice of $\sfA_1\oplus \sfU$.  
According to Nikulin's classification of even 2-reflexive hyperbolic lattices of rank $\ge 5$, none of the lattices 
$\calAp(n)^{\ev}$ or $\calAp(n)^{\ev}$ is 2-reflexive \cite{Nikulin1}. 

If $n= 2$, the lattices $\calAp_2$ and $\calAp_2^\perp$ are isomorphic. According to Vinberg's list of 2-reflexive even hyperbolic lattice of rank 4 \cite{Vinberg2}, the lattice $\calAp_2^{\ev}$ is  not 2-reflexive.

\end{example}

Note that all examples from  \cite{Boyd2} are examples  of integral Boyd-Maxwell sphere packings.

\section{Orbital counting}
Let $A$ be a subset of the Euclidean space $\bbR^n$. Recall that the \emph{Hausdorff dimension} $\delta(A)$ of $A$ is defined to be the infimum for all $s\ge0$ for which
\beq\label{hd1}
\mu_s(A) = \inf_{A\subset \cup_jB_j}(\sum_jr(B_j)^s) = 0.
\eeq
Here $(B_j)$ is a countable set of open balls of radii $r(B_j)$ which cover $A$.

For example, if the Lebesque measure of $A$ is equal to $0$, then \eqref{hd1} holds for $s =n$, hence $\delta(A) \le n$. The Hausdorff dimension coincides with the Lebesque measure if the latter is positive and finite. A countable set has the Hausdorff measure equal to zero. Also it is known that the topological dimension of $A$ is less than or equal to its Hausdorff dimension.

The Hausdorff dimension is closely related to the \emph{fractal dimension} of a \emph{fractal set} $A$, i.e. a set  that can be subdivided in some 
finite number $N(\lambda)$ of subsets, all congruent (by translation or rotation) to one another and each equal to a scaled copy of 
$A$ by a linear factor $\lambda$. It is defined to be equal to $\frac{\log N(s)}{\log(1/\lambda)}$. For example,   
the Cantor set consists of two parts $A_1$ and $A_2$ (contained in the interval $[0,1/3]$ and $[1/3,1]$), each rescaled version of the set 
with the scaling factor $1/3$. Thus its fractal dimension is equal to $\log 2/\log 3 < 1$. It is clear that the Hausdorff 
dimension of a bounded fractal set of diameter $D=2R$ is less than or equal than the fractal dimension. In fact, such a set can be covered by $N(\lambda)$ 
balls of radius $\lambda R$, or by $N(\lambda)^2$ balls of radius $\lambda^2R$. Since $\lambda < 1$, we get 
$\mu_s(A) = \lim_{k\to \infty}\frac{N(\lambda)^k}{(\lambda^kR)^s}$ which is zero if $N(\lambda)\lambda^s <1$ or $s > \log N/\log(1/\lambda)$.
In fact, the Hausdorff dimension of the Cantor set coincides with its fractal dimension $\log 2/\log 3$.

By a theorem of D. Sullivan \cite{Sullivan}, for a geometrically finite non-elementary discrete group $\Gamma$, the Hausdorff dimension $\delta_\Gamma$ 
of $\Lambda(\Gamma)$ is positive and coincides with the \emph{critical exponent} of $\Gamma$ equal to 
\beq\label{critical}
 \inf\{s>0:\sum_{g\in \Gamma}e^{-sd(x_0,g(x_0))}<\infty\},
\eeq
where $x_0$ is any point on $\bbH^n$. Using this equality Sullivan shows that 
\beq\label{sul1}
\delta_\Gamma = \overline{\underset{T\to \infty}{\lim}}\frac{\log N_T}{R},
\eeq
where $N_T$ is the number of orbit points  $y$ with hyperbolic distance from $x_0$ less than or equal than $R$. He 
further shows in \cite{Sullivan2}, Corollary 10, under the additional assumption that $\Gamma$ has no parabolic fixed points, 
 that there exists constants $c,C$ such that, as $T\to \infty$, 
\beq\label{sul2}
Ce^{T\delta_\Gamma}\le N_T \le Ce^{T\delta_\Gamma}.
\eeq
In particular, asymptotically, as $T\to \infty$,
$$N_T \sim c(T)e^{T\delta_\Gamma},$$
where $\underset{T\to\infty}{\lim}\frac{c(T)}{T} = 0$.

 If $\delta_\Gamma > \frac{1}{2}(n-1)$,  then P. Lax and R. Phillips show that, for any geometrically finite 
 non-elementary discrete group $\Gamma$, the function $c(T)$ is a constant depending only on $\Gamma$. When $\Gamma$ is of finite covolume, then $\delta_\Gamma$ is known to be equal to $n-1$, and the result goes back to A. Selberg. 
 
 The assumption on $\delta_\Gamma$ has been lifted by T. Roblin \cite{Roblin}. Applying \eqref{dist1}, we will be able to  obtain the assertion of Theorem \ref{intro:thm2} from the introduction in the case when 
 $C^2< 0$. To do the remaining cases where $(e,e)$ is non-negative   we need to replace the family of hyperbolic balls with another  family of sets of growing volume. 
 
Let $e$ be a nonzero vector in $\bbR^{n,1}$. We assume that $(e,e) = \pm 1$ if $(e,e)\ne 0$ and we fix an isotropic $e'$ with $(e,e') = -1$ if $(e,e) = 0$. The group $G = \SO(n,1)_0$ acts in the projective space $\bbP(\bbR^{n,1})$ with three orbits $G\cdot [e]$. Let $G_{[e]}$ be the stabilizer subgroup of $[e]$ in $G$. If $(e,e) < 0$, then $G_{[e]}$ is a maximal compact subgroup in $K\subset G$ isomorphic to $\SO(n)$ and the orbit $G\cdot [e]$ is the homogeneous space of left cosets $G/K$ isomorphic to $\bbH^n$. 

If $(e,e) = 0$, then $G_{[e]}$ is a parabolic subgroup of $G$. If we identify $\sfQ\setminus \{[e]\}$ with the Euclidean space $E^{n-1}$ (via the projecting from $[e]$), we obtain a surjective homomorphism from $G_{[e]}$ to the group $P$ of 
affine orthogonal transformations (motions) of the Euclidean space $E^{n-1}$. This homomorphism splits by a subgroup 
$G_{ce}$ of $G_{[e]}$ that stabilizes any nonzero vector $ce$ on the line $[e]$. The kernel of the homomorphism is the group $A_l$ of hyperbolic translations along the hyperbolic line $l$ corresponding to the subspace spanned by $e$ and $e'$. This shows that 
$G_{[e]} = O(n-1)\cdot A_l\cdot N_{[e]},$
where $O(n-1)$ is the group of rotations around the line $l$ and $N_{[e]} \cong \bbR^{n-1}$ is the normal subgroup of translations of $P$, called a \emph{horospherical subgroup} of $G$. The decomposition of $G_{[e]}$ from above 
is induced by the  \emph{Iwasawa decomposition} $G = G_x\cdot A_l\cdot N_{[e]}$ of $G$, where $x$ is any point on $l$. 
The group $G_{[e]}$ acts transitively on the quadric $\sfQ$ and on $\bbH^p$. 

If $(e,e) > 0$, then $G_{[e]}\cong \SO(n-1,1)_0$, the homogeneous space $G_{[e]}\backslash G$ is known as a \emph{de Sitter space}. By taking the orthogonal complement of the line spanned by $e$, we identify the points of the de Sitter space with oriented hyperplanes $H_\frake$ in $\bbH^n$. For example, if $n = 2$, the de Sitter space is the set of oriented geodesic lines in the hyperbolic plane.

 Let us consider the \emph{pencil of geodesic lines} $\calP(e)$ defined by  indefinite planes $U\subset \bbR^{n,1}$ containing $e$. If $(e,e) < 0$, then $\calP(e)$ consists of geodesic lines containing the point $x_0 = [e]\in \bbH^n$. It is called an \emph{elliptic pencil}. If $(e,e) > 0$, it consists of parallel geodesic lines perpendicular to the hyperplane $H_e$. It is called a \emph{hyperbolic pencil}. Finally, if $(e,e) = 0$, it consists of geodesic lines whose closure in $\overline{\bbH^n}$ contains the point $[e]$. The pencil is called \emph{parabolic pencil} with center at $[e]$.

The parametric equation of the geodesic line from the pencil $\calP(e)$ is equal to 
\[
\gamma(t) = \begin{cases}
      v\sinh t+e\cosh t, (v,e) = 0, (v,v) = 1& \text{if } (e,e) = -1, \\
      v\exp(t)+e\sinh t, (v,e) = 1, (v,v) = -1& \text{if } (e,e) = 0,\\
      v\cosh t-e\sinh t, (v,e) = 0, (v,v) = -1&\text{otherwise}.
\end{cases}
\]
(see \cite{Vinberg}, Chapter 4, 2.3). In the elliptic  case, we have
$(\gamma(t), e) = -\cosh t$, so that $d(\gamma(t),[e]) = t$. So, moving along the geodesic line for the distance $t \ge 0$, we get the set of points in $\bbH^n$ equidistant from $[e]$. 
This is a \emph{geodesic sphere} 
$$H_e^{t}: = \{[v]\in \bbH^n:-(v,e) = \cosh t\} = \{[v]\in \bbH^n:d([v],[e]) = t\}$$
 with the center at $x_0 = [e]$. The group $G_{[e]}\cong \SO(n)$ acts transitively on each $H_e^t$ and, projecting from $[e]$ along the lines from the pencil $\calP(e)$ identifies each geodesic sphere with the absolute as homogenous spaces with respect to $G_{[e]}$.  So, the orbits of $G_{[e]}$ in $\bbH^n$ are Riemannian homogeneous spaces of constant positive curvature.
 
If $(e,e) = 0$, we have $(\gamma(t),e) = \exp(t)$. The orbits of $N_{[e]}$ are \emph{horospheres}
$$H_e^t:= \{[v]\in \bbH^n:-(v,e) = \exp(t)\}.$$ 
In the vector model of $\bbH^n$, a horosphere  is equal to the intersection of $\bbH^n$ with a sphere in the Euclidean space tangent to $\sfQ$ at the point $[e]$. In the Klein model, they are ellipsoids tangent to the boundary of the ball. Any horosphere inherits a Riemannian metric of zero curvarure and hence isomorphic to an Euclidean space.

If $(e,e) > 0$, we have $(v,e) = (\gamma(0),e) = 0$, i.e. $[\gamma(0)]\in H_e$. Since $(\gamma(t),e) = -\sinh t$, after 
time $t$ we move from a point on the hyperplane $H_e$ to a point $[\gamma(t)]$ on the hypersurface 
$$H_e^{t} = \{[w]\in \bbH^n: -(w,e) = \sinh t\}.$$
The hypersurfaces of this form are called \emph{equidistant hypersurfaces}. They are orbits of the group $G_{[e]}$ and 
inherit a Riemannian metric of constant negative curvature. In the Klein model, they are ellipsoids whose closures in 
$\overline{\bbH^n}$ intersect the closure $\bar{H}_e$ of the hyperplane $H_e$ along its boundary. For example, if $n = 2$, they are ellipses tangent to the absolute at the two points in which the closure of the geodesic line $H_e$ intersects the absolute.

In each of the three cases, let us consider the following sets $B_T(e)$.

If $(e,e) = -1$, 
$$B_T(e) = \bigcup_{t=0}^TH_e^t = \{[v]\in \bbH^n:1\le -(v,e) \le \cosh T\}.$$
This is just a hyperbolic closed ball with center at $[e]$ and radius $T$.

If $(e,e) = 0$,
$$B_T(e) = \bigcup_{t=0}^TH_e^t = \{[v]\in \bbH^n:1\le |(v,e)| \le \exp(T)\}.$$

If $(e,e) = 1$,
$$B_T(e) = \bigcup_{t=0}^TH_e^t = \{[v]\in \bbH^n: |(v,e)| \le \sinh(T)\}.$$

Fix a point $x_0 = [v_0]\in \bbH^n$ and let $K = G_{x_0}$ so that $G/K = \bbH^n$. Let $H = \{1\}$ (resp. $N_{[e]}$, resp. $G_{[e]}$ if $(e,e) = -1$ (resp. $(e,e) = 0$, resp. $(e,e) = 1$). 

Consider the following subsets of $G/H$ 
$$\calS_T(e,x_0):= G_{x_0}\cdot A_T\cdot H/H,$$
where $A_T = \{a_t\in A:0\le t\le \log T\}.$

The following result from \cite{OS}, Theorem 1.2 and \cite{OM}, Corollary 7.14 (see a nice survey of some of these results in \cite{Oh}) is crucial for our applications.

\begin{theorem}\label{om} Let $\Gamma$ be a torsion free geometrically finite non-elementary discrete  subgroup of $\SO(n,1)_0$. Let $[e]\in \bbP(\bbR^{n,1})$. If $(e,e) \ge 0$, assume that the orbit
$\rmO_\Gamma([e])$ is a discrete set and also that $\delta_\Gamma > 1$ if $(e,e) > 0$. Then
$$\lim_{T\to \infty}\frac{\Gamma\cdot H/H\cap \calS_T(e,x_0)}{T^{\delta_\Gamma}} = c_{\Gamma,x_0,[e]},$$
where $c_{\Gamma,x_0,[e]}$ is a positive constant that  depends only on $\Gamma,x_0,[e]$.  
\end{theorem}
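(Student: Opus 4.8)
The plan is to convert the counting problem into an equidistribution statement on the homogeneous space $\Gamma\backslash G$, following the method of Eskin--McMullen in the lattice case and its extension to geometrically finite $\Gamma$ by Roblin and by Oh--Shah. Since $A_T=\{a_t:0\le t\le\log T\}$ is parametrized by $\log T$, the exponential orbit growth $e^{\delta_\Gamma t}$ will appear as the polynomial rate $T^{\delta_\Gamma}$, and Sullivan's estimate \eqref{sul2} already supplies the correct order of magnitude in the ball case $(e,e)=-1$; the content of the theorem is to upgrade those bounds to a genuine limit with a positive constant and to treat the three geometries $(e,e)<0$, $(e,e)=0$, $(e,e)>0$ uniformly. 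In all three, counting the points of $\Gamma\cdot H/H$ in $\calS_T(e,x_0)=G_{x_0}\cdot A_T\cdot H/H$ amounts to understanding how the expanding translates $a_t\cdot(\Gamma\backslash\Gamma H)$ distribute inside $\Gamma\backslash G$ as $t=\log T\to\infty$. First I would fix a $\Gamma$-invariant conformal density $\{\mu_x\}$ of dimension $\delta_\Gamma$ supported on $\Lambda(\Gamma)$ (so $\gamma_*\mu_x=\mu_{\gamma x}$ and $\frac{d\mu_x}{d\mu_y}(\xi)=e^{-\delta_\Gamma\beta_\xi(x,y)}$ for the Busemann cocycle $\beta$) and build from it the Bowen--Margulis--Sullivan measure $m_{\mathrm{BMS}}$ on $\Gamma\backslash G$. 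The decisive structural fact is that, because $\Gamma$ is geometrically finite, $m_{\mathrm{BMS}}$ is \emph{finite}; this is where geometric finiteness enters, and it is exactly the hypothesis that the crude Sullivan input cannot by itself exploit.

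The engine of the argument is mixing: for finite $m_{\mathrm{BMS}}$ the flow $\{a_t\}$ acts mixingly on $(\Gamma\backslash G,m_{\mathrm{BMS}})$ (Babillot for the geodesic flow, and in the frame-flow formulation Winter), the required non-arithmeticity of the length spectrum being automatic here since $\Gamma$ is Zariski dense in $G$. From mixing one extracts the key equidistribution statement: pushing the $H$-Patterson--Sullivan (skinning) measure carried on a bounded piece of the $H$-orbit forward by $a_t$ and renormalizing by $e^{\delta_\Gamma t}$, one obtains weak-$*$ convergence to a fixed multiple of $m_{\mathrm{BMS}}$. I would test this against a fixed bump function on $\Gamma\backslash G$, using a wavefront (thickening) lemma to pass between the $G_{x_0}A_tH$-coordinates of $\calS_T$ and the $a_t$-flow direction. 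Integrating the equidistribution against the indicator of $\calS_T$ then yields the main term: the mass of $\calS_T$ computed against the conformal density grows like $T^{\delta_\Gamma}$, so the count is asymptotic to $c_{\Gamma,x_0,[e]}T^{\delta_\Gamma}$. The constant is, up to the factor $\delta_\Gamma$, the product of the $H$-skinning mass and the Patterson--Sullivan mass $\|\mu_{x_0}\|$ divided by the total mass $|m_{\mathrm{BMS}}|$; its positivity follows because each of these three masses is positive and finite.

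The principal obstacle is the non-compactness of $\Gamma\backslash\bbH^n$: a geometrically finite group may have cusps, so the expanding translates can develop escape of mass and the equidistribution can fail to be uniform near the cuspidal ends. This is precisely what the extra hypotheses control. The assumption that $\rmO_\Gamma([e])$ is discrete, imposed when $(e,e)\ge 0$, forces the orbit $\Gamma\backslash\Gamma H$ to be properly immersed and rules out the degenerate situation in which the translated $H$-orbit accumulates on itself; this is what makes the skinning measure a genuine finite Radon measure. In the de Sitter case $(e,e)>0$, where $H\cong\SO(n-1,1)_0$ is a symmetric subgroup whose orbit meets the boundary along the sub-sphere $S(e)$, the hypothesis $\delta_\Gamma>1$ is what guarantees convergence of the integral defining the skinning mass; without it that normalizing constant diverges and the clean $T^{\delta_\Gamma}$ asymptotic breaks down, which is the structural reason the ball family $\calB_T$ must be replaced by the equidistant-hypersurface family in this case. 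The single hardest step is therefore the equidistribution of the expanding $H$-translates together with effective control of the mass escaping into the cusps; establishing this via mixing and a careful analysis of the thick--thin decomposition is the technical heart of the method of \cite{OS} and \cite{OM}, and once it is in hand the passage to the stated limit is the standard unfolding-and-thickening computation.
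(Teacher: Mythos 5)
The paper gives no proof of Theorem \ref{om} at all: it is imported verbatim from \cite{OS}, Theorem 1.2 and \cite{OM}, Corollary 7.14, and the surrounding remarks only explain how special cases follow from earlier counting results. Your sketch is a faithful reconstruction of the argument in precisely those cited sources --- Patterson--Sullivan density, finiteness of the Bowen--Margulis--Sullivan measure via geometric finiteness, mixing of the flow, equidistribution of expanding translates of the skinning measure, with the discreteness of $\rmO_\Gamma([e])$ ensuring local finiteness of that measure and $\delta_\Gamma>1$ its finiteness in the de Sitter case --- so it takes essentially the same approach as the paper's proof-by-citation.
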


Note that the results from \cite{OM} also give error terms.

\begin{corollary} Keep the same assumptions. Fix a point $x_0\in \bbH^n$. Then
$$\lim_{t\to \infty}\frac{\#\{[v]\in O_{\Gamma}(x_0):|(e,v)| \le T\}}{T^{\delta_\Gamma}} = c_{\Gamma,x_0,[e]},$$
where $c_{\Gamma,x_0,[e]}$ is a positive constant that  depends only on $\Gamma,x_0,[e]$.  
\end{corollary}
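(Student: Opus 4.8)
The plan is to deduce the Corollary from Theorem \ref{om} by translating its abstract counting set $\calS_T(e,x_0) = G_{x_0}\cdot A_T\cdot H/H$ into the concrete sublevel set $\{|(e,v)|\le T\}$ on the orbit $\rmO_\Gamma(x_0)$, and then absorbing the resulting change of normalization into the constant. The first move is to use the self-adjointness of the form, writing $(e,\gamma v_0) = (\gamma^{-1}e,v_0)$ and replacing $\gamma$ by $\gamma^{-1}$ (legitimate since $\Gamma=\Gamma^{-1}$); this exhibits $\#\{[v]\in\rmO_\Gamma(x_0):|(e,v)|\le T\}$ as a count of the $\Gamma$-orbit of the object attached to $e$ (a point of $\bbH^n$ if $(e,e)=-1$, the center of a horosphere if $(e,e)=0$, the oriented hyperplane $H_e$ if $(e,e)=1$) against the fixed base point $x_0$. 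Under the standard identifications $G/\{1\}=G$ and $G/G_{[e]}\cong G\cdot[e]$, this orbit is identified with $\Gamma\cdot H/H$ (in the parabolic case only up to the horospherical fibre $G_{[e]}/N_{[e]}$, which I postpone), so that the numerator of the Corollary becomes the numerator of Theorem \ref{om} for a suitable truncation.

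Next I would set up the distance dictionary from the parametric equations of the pencil $\calP(e)$ together with \eqref{dist1}. For a point $[v]$ lying on the geodesic of $\calP(e)$ at flow-parameter $t\ge 0$ one reads off $-(v,e)=\cosh t$ when $(e,e)=-1$, $|(v,e)|=\exp t$ when $(e,e)=0$, and $|(v,e)|=\sinh t$ when $(e,e)=1$. Since $G_{x_0}$ fixes $x_0$ and hence preserves the pairing $|(e,\cdot)|$ after transport, the truncation $A_T=\{a_t:0\le t\le\log T\}$ defining $\calS_T(e,x_0)$ corresponds precisely to the condition $|(e,v)|\le\exp(\log T)=T$ in the parabolic case, and to $|(e,v)|\le\cosh(\log T)$, $|(e,v)|\le\sinh(\log T)$ in the elliptic and hyperbolic cases. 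Solving for the parameter value at which $|(e,v)|=T$ then shows that the Corollary's region $\{|(e,v)|\le T\}$ equals $\Gamma\cdot H/H\cap\calS_{\phi(T)}(e,x_0)$, where $\phi(T)=T$ in the parabolic case and $\phi(T)=\exp(\operatorname{arccosh}T)$, $\phi(T)=\exp(\operatorname{arcsinh}T)$ in the other two; in every case $\phi(T)/T\to a$ for a positive constant $a$ (namely $a=1$ or $a=2$).

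Then I would simply invoke Theorem \ref{om} with the truncation parameter $\phi(T)$, whose hypotheses are exactly the ones assumed here (torsion free, geometrically finite, non-elementary, with $\rmO_\Gamma([e])$ discrete when $(e,e)\ge 0$ and $\delta_\Gamma>1$ when $(e,e)>0$). This gives $\#(\Gamma\cdot H/H\cap\calS_{\phi(T)})\sim c_{\Gamma,x_0,[e]}\,\phi(T)^{\delta_\Gamma}\sim c_{\Gamma,x_0,[e]}\,a^{\delta_\Gamma}T^{\delta_\Gamma}$, and renaming the constant $c_{\Gamma,x_0,[e]}a^{\delta_\Gamma}$ yields the stated limit with a positive constant depending only on $\Gamma$, $x_0$ and $[e]$.

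The main obstacle is the parabolic case $(e,e)=0$, where $H=N_{[e]}$ is positive dimensional: one must check that the fibres of the projection $\Gamma\cdot H/H\to\rmO_\Gamma([e])$ do not collapse, which is guaranteed precisely by the discreteness hypothesis on $\rmO_\Gamma([e])$, and one must account for the orbit points lying strictly inside the base horosphere, i.e. with $|(e,v)|<1$, which are excluded by the lower truncation $t\ge 0$ built into $\calS_T$. Verifying that only finitely many points of $\rmO_\Gamma(x_0)$ satisfy $|(e,v)|<1$---again a consequence of the discreteness of $\rmO_\Gamma([e])$---is the one genuinely delicate point; the elliptic and hyperbolic cases, by contrast, differ from the hyperbolic-ball count underlying Theorem \ref{intro:thm1} only by the harmless reparametrization $\phi$.
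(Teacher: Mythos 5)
Your proposal is correct and follows essentially the same route as the paper, which in fact prints no proof at all: the Corollary is presented as an immediate consequence of Theorem \ref{om}, the translation being exactly the dictionary you spell out, namely the pencil formulas $-(v,e)=\cosh t$, $|(v,e)|=\exp t$, $|(v,e)|=\sinh t$ identifying $\{|(e,v)|\le T\}$ with a truncation $\calS_{\phi(T)}(e,x_0)$ where $\phi(T)\sim aT$, after which $a^{\delta_\Gamma}$ is absorbed into the constant. Your explicit treatment of the duality $\gamma\mapsto\gamma^{-1}$ and of the finitely many points with $|(e,v)|<1$ in the parabolic case only makes precise what the paper leaves implicit.
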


\begin{remark} The assumption $\delta_\Gamma > 1$ is not needed if the group $\Gamma\cap N_{[e]}$ acts on $\partial H_e$ without parabolic fixed points. Also, the assumption that $\Gamma$ is torsion free made in loc.cit. is needed only for an explicit formula for the constant. 
\end{remark}

\begin{remark} In the case when $(e,e)< 0$ (resp. $(e,e) = 0$ and $n = 3$), Theorem \ref{om} follows from \cite{OS}, Theorem 1.2 (resp. \cite{KO}, Theorem 2.10) that gives an asymptotic of the number of orbit points in a ball $\{[v]\in \bbH^n:||v|| \le T\}$, where $||x||$ is the Euclidean norm. 
One has only use that if $v = (x_0,\ldots,x_n)$ with $\sum_{i=0}^nx_i^2 \le T^2$ and $-x_0^2+\sum_{i=1}^nx_i^2 = -1$ (resp. $=0$), then 
$(v,(1,\ldots,0))^2 = x_0^2 \le (T^2+1)/2$ (resp. $\le T^2$). 

In the case  $(e,e) > 0$, and $\Gamma$  and  $\Gamma\cap G_{[e]}$ are of finite covolume (the second condition is 
always satisfied if $n\ge 3$ \cite{DM}),  Theorem \eqref{om} follows from 
\cite{DRS}, \cite{EM}. In this case $\delta_\Gamma = n-1$.
\end{remark}

\section{Algebraic geometrical realization}
Let $X$ be a smooth projective algebraic surface over an algebraically closed field $\Bbbk$. 
The group of its automorphisms defines a natural left action $\gamma\mapsto (g^{-1})^*(\gamma)$ on its lattice $\Num(X)$ of algebraic cycles modulo numerical equivalence. As usual, we denote 
its rank by $\rho(X)$, this is the \emph{Picard number} of $X$. By the Hodge  Index Theorem, the intersection form on algebraic  cycles defines a symmetric bilinear form on $\Num(X)$ of signature $(1,n)$. We denote by $V_X$ the corresponding real vector space $\Num(X)_\bbR$ equipped with the induced symmetric bilinear form of signature $(1,n)$.  We assume that the image $\Aut(X)^*$ of the homomorphism $\Aut(X)\to \Or(\Num(X))$ is infinite. It is known that this could happen only if $X$ is an abelian surface, a K3 surface, an Enriques surface, or a rational surface.  Moreover, $n$ must be larger than or equal to $2$, and in the last case, $n\ge 9)$. All of this is rather well-known, see, for example, \cite{DolgachevR}. Let $\Gamma$ be an infinite subgroup of $\Aut(X)^*$. Since the automorphism group preserves the connected component of $\{v\in \Num(X)_\bbR:(v,v) > 0\}$ containing an ample class on $X$, we obtain that $\Gamma$ is  a discrete  subgroup in $\Or(V_X) \cong \rmO(n,1)$. Also, since $\Gamma$ preserves an integral lattice, its orbits are discrete sets. Thus, we can apply Theorem \ref{om} to obtain the following.

\begin{theorem}\label{AA} Assume that $\Gamma$ is a geometrically finite non-elementary subgroup of $\Aut(X)^*$.  Let
$H,C$ be effective numerical divisor classes in $\Num(X)'$ such that $H$ is ample.  Then
\beq
\lim_{t\to \infty}\frac{\#\{C'\in \rmO_\Gamma(C):(H,C')/(H,C) \le T\}}{T^{\delta_\Gamma}} = c_{\Gamma,C,H},
\eeq
where $c_{\Gamma,H,C}$ is a positive constant depending only on $\Gamma,[H],[C]$  but does not depend on $C^2$ and $H^2$.
\end{theorem}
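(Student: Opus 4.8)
The plan is to transcribe the statement into the hyperbolic-geometric language of Section~8 and then quote Theorem~\ref{om}. First I would replace the intersection form on $V_X=\Num(X)_\bbR$ by its negative, writing $v\cdot w$ for the intersection pairing and $(v,w):=-\,v\cdot w$ for the resulting form of signature $(n,1)$, to which the constructions of Section~2 apply verbatim; this changes none of the orthogonal groups, only the causal interpretation of vectors. Under this convention an ample class is timelike: since $H$ is ample we have $H^2:=H\cdot H>0$, so $h_0:=H/\sqrt{H^2}$ satisfies $(h_0,h_0)=-1$ and determines a point of $\bbH^n$. Because $\Gamma\subset\Aut(X)^*$ carries the ample cone to itself it preserves the relevant sheet of the hyperboloid, hence $\Gamma\subset\Iso(\bbH^n)$; passing to $\Gamma\cap\SO(n,1)_0$, and, if convenient, to a torsion-free finite-index subgroup (available by Selberg's lemma, since a geometrically finite group is finitely generated), places us in the setting of Theorem~\ref{om}. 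The limit set is unchanged under passage to a finite-index subgroup, so $\delta_\Gamma$ is unaffected, and summing the resulting asymptotics over the finitely many cosets only adjusts the constant while keeping it positive; by the remark following the Corollary, the discarded torsion affects only the explicit value of the constant.

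Next I would record two positivity facts. Every $C'\in\rmO_\Gamma(C)$ is effective, since elements of $\Gamma$ are induced by automorphisms of $X$ and automorphisms preserve effectivity, and an ample class meets every nonzero effective class positively; hence $H\cdot C'>0$ throughout the orbit, so that $H\cdot C'=\sqrt{H^2}\,|(h_0,C')|$. The core is then a case distinction by the sign of $C^2$, matching the three causal types of Theorem~\ref{om}. If $C^2>0$, then $c_0:=C/\sqrt{C^2}\in\bbH^n$ and $\rmO_\Gamma(C)$ is, up to the fixed scalar, the point-orbit $\rmO_\Gamma(c_0)$; I would invoke the Corollary with base point $c_0$ and test vector $e=h_0$ (so $(e,e)=-1$). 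If $C^2=0$, then $[C]\in\partial\bbH^n$ and I would keep $x_0=h_0$, take $e=C$ (so $(e,e)=0$), and use the horospherical form of Theorem~\ref{om}; the required discreteness of $\rmO_\Gamma([e])$ holds because $\Gamma$ preserves an integral lattice. If $C^2<0$, then $[C]$ lies outside $\overline{\bbH^n}$, the class corresponds to an oriented hyperplane, and $\rmO_\Gamma(C)$ is the $\Gamma$-orbit of that hyperplane; here I would apply Theorem~\ref{om} with $e=C/\sqrt{-C^2}$ (so $(e,e)=1$) and subgroup $G_{[e]}$, which counts the distinct hyperplanes $\Gamma\cdot G_{[e]}/G_{[e]}=\Gamma/\Gamma_{[C]}$ inside $\calS_T(e,h_0)$, using the standing assumption $\delta_\Gamma>1$. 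In each case the distance formulas of Section~8 ($-(h_0,v)=\cosh d$ for a point, $|(h_0,e')|=\sinh d$ for a hyperplane) turn the inequality $H\cdot C'\le T\,(H\cdot C)$ into membership in a region governed by the parameter $\kappa T$, where $\kappa>0$ is an explicit constant built from $H^2$, $C^2$ and $H\cdot C$. Theorem~\ref{om} then gives
\beq
\lim_{T\to\infty}\frac{\#\{C'\in\rmO_\Gamma(C):H\cdot C'\le T\,(H\cdot C)\}}{T^{\delta_\Gamma}} = \kappa^{\delta_\Gamma}\,c_{\Gamma,x_0,[e]} =: c_{\Gamma,H,C}>0.
\eeq

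To obtain the independence from $H^2$ and $C^2$ I would argue by scale invariance rather than by tracking $\kappa$: the counted ratio $(H\cdot C')/(H\cdot C)$ is unchanged when $H$ is rescaled or when the orbit of $C$ is rescaled, so the counting function, and hence its limiting constant, depends only on the rays $[H],[C]\in\bbP(V_X)$. Since $H^2$ and $C^2$ vary along these rays, the constant cannot depend on them; concretely, the apparent dependence of $\kappa$ on the squares is exactly cancelled by the normalizing dependence of $c_{\Gamma,x_0,[e]}$.

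I expect the main obstacle to be the case $C^2\le 0$, namely the bookkeeping that identifies the set of \emph{distinct} effective classes $C'$ with the homogeneous orbit $\Gamma\cdot H_{\mathrm{sub}}/H_{\mathrm{sub}}$ of Theorem~\ref{om} for the correct subgroup $H_{\mathrm{sub}}$. When $C^2<0$ the stabilizer $\Gamma_{[C]}$ is typically infinite (it is the geometrically finite group acting on the associated hyperplane, as in Section~5), so one must count through $G_{[e]}$ rather than through the point-orbit form, and when $C^2=0$ one must verify that the horospherical quotient records each class exactly once; this is where the discreteness hypothesis on $\rmO_\Gamma([e])$ does its work. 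The remaining, more routine difficulty is the reduction to a subgroup satisfying the orientation and torsion hypotheses of the cited theorem without altering $\delta_\Gamma$ or the positivity of the limiting constant.
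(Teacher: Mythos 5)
Your proposal is correct and follows essentially the same route as the paper: the paper's entire proof consists of observing that $\Gamma$ is a discrete subgroup of $\Or(V_X)\cong \rmO(n,1)$ whose orbits are discrete because an integral lattice is preserved, and then citing Theorem \ref{om}. Your case analysis on the sign of $C^2$, the Selberg/torsion reduction, and the stabilizer bookkeeping for $C^2\le 0$ are precisely the details the paper leaves implicit (including the hypothesis $\delta_\Gamma>1$ when $C^2<0$, which appears in the introduction's version of the result but is silently omitted from the statement of Theorem \ref{AA} itself).
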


Now let us give some examples where we can apply the previous result. First, we note that we may always consider $\Gamma$ up to a finite group, i.e. we either consider its subgroup of finite index, or extend it by a finite group, or do both. It affects only the constant in the asymptotic formula.

Let $X$ be a complex algebraic K3 surface. It is known that the subgroup of $\Or(\Num(X))$ generated by transformations $g^*, g\in \Aut(X)$ and reflections $s_r, (r,r) = -2$ is a subgroup of finite index  (this is also true if the characteristic $p \ne 2$ \cite{LM}).  Thus, if $\Num(X)$ does not contain classes $r$ with $(r,r) = -2$, then $\Aut(X)$ is a Kleinian group of finite covolume.\footnote{By Riemann-Roch theorem, such a divisor class is either effective or anti-effective, and one of its irreducible component is a smooth rational curve.} The Hausdorff dimension $\delta_\Gamma$ in this case is equal to $n-1$. So, it is more interesting to realize proper thin Kleinian groups of automorphisms.

Assume that we find a surface $X$ and vectors $\alpha_1,\ldots,\alpha_r\in \Num(X)$ such that 
$\frac{2}{(\alpha_i,\alpha_i)}\alpha_i\in \Num(X)^\vee$, so that the reflections 
$s_{\alpha_i}$ 
define orthogonal transformations of $\Num(X)$. Moreover, assume that there exist automorphisms $g_i\in \Aut(X)$ 
such that 
$g_i^* = s_{\alpha_i}$. Let $P$ be the convex polyhedron with normal vectors 
$\frake_i = \frac{1}{(\alpha_i,\alpha_i)^{1/2}}\alpha_i$. Assume that it is a Coxeter polyhedron of level $\le 2$ with the 
reflection group $\Gamma_P$ generated by $s_{\alpha_1},\ldots, s_{\alpha_r}$. Then the limit set of $\Gamma_P$ is a Boyd-Maxwell sphere packing and its Hausdorff dimension $\delta_{\Gamma_P}$ is equal to the critical exponent \eqref{crit}. Now we can apply Theorem \ref{AA}.  

\begin{example}\label{enr} Let $M = \calAp(2)^{\ev} \cong (\sfA_1\oplus \sfU)(2)$ be the even sublattice of the Apollonian lattice and 
$\Ap_2$ be the Apollonian group. Let $\omega_1=\frake_1,\ldots,\omega_4 = \frake_4$ be a basis in $\calAp(2)$ with Gram matrix 
equal to $G = \circulant(1,-1,-1,-1) = (g_{ij})$. The lattice $M(-1/2) \cong \sfA_1(-1)\oplus \sfU$ can be primitively embedded in the lattice 
$\Num(X) \cong \sfU\oplus E_8(-1)$, where $X$ is an Enriques surface. We identify the image with $M(-1/2)$. 
 The group $\Ap_2$ contains a subgroup of index 2 that consists of isometries of $M(-1/2)$ that extend to the whole lattice acting identically on the orthogonal complement. Thus
the subgroup $\Gamma = \Ap_2'\cap \Or(\Num(X)'$ is realized by automorphisms of $X$. One can show that the index of 
$\Gamma$ in $\Ap_2$ is equal to  4. Now we can apply Theorem \ref{AA} with 
$\delta_\Gamma = \delta_{\Ap_2}$.

Of course, one can also realize $\calAp(2)(-1)$  as the Picard lattice  of a K3 surface $X$. Since the lattice does not represent $-2$,  a subgroup of finite  index of $\Ap_2$ is realized as a group of automorphisms of a K3 surface.
according to Vinberg's classification of 2-reflexive lattices of rank 4 \cite{Vinberg2}, the 2-reflection group of the lattice 
$\calAp(2)$ is of infinite index in the orthogonal group of the lattice. Thus $\Ap_2$ is a thin group geometrically finite group of automorphisms of $X$.

The same is true for the group $\Ap_3$; it contains a subgroup of finite index that can be realized as a 
thin geometrically finite  group of automorphisms of either an Enriques or a K3 surface.

\end{example}

\begin{example} Consider the Coxeter group $\Gamma(a,b,c)$ of a Coxeter polyhedron $P$ in $\bbH^2$ with Gram matrix 
$\bsm 1&-a&-b\\
-a&1&-c\\
-b&-c&1\esm$, where $a,b,c$ are rational numbers $ \ge 1$. If $a = b = c = 1$, the fundamental domain is an ideal 
triangle. If $a,b,c > 2$,  the fundamental triangle is a pair of pants as on the following picture.

\xy (-10,-35)*{};
(50,0)*{\color{blue}\cir<50pt>{}};
(57,17)*{\color{red}\cir<30pt>{d^ur}};
(25,0)*{\color{red}\cir<50pt>{dr_dl}};
(66.5,-7)*{\color{red}\cir<35pt>{l^dr}};
(50,-25)*{\textrm{Figure 7: Hyperbolic triangle with no vertices}}
\endxy

Let $L$ be the quadratic form defined by the integral matrix $A = -eG(P)$, where $e$ is the exponent of the 
Coxeter matrix. Let $L^\vee$ be the dual lattice. Suppose we can embed $M = \integ(L^\vee)^{\ev}$ in $\Num(X)$ of some K3 surface and realize the reflection group $\Gamma_P$ as a group of automorphisms of $X$. Then we can apply Theorem \ref{AA} to any nef divisor class contained in the sublattice $M$. 

For example, let us consider the case $(a,b,c) = (a,a,1)$. The fundamental triangle has one ideal point and looks like in the following figure \ref{fig7}.

\begin{figure}
\begin{center}
\xy (-60,-35)*{};
(0,0)*{\color{blue}\cir<49pt>{}};
(17,17)*{\color{red}\cir<48pt>{d^r}};
(17,-17)*{\color{red}\cir<48pt>{l^d}};
(-25,0)*{\color{red}\cir<50pt>{dr_dl}};
(10,-25)*{\textrm{Figure 8: Hyperbolic triangle with one ideal point}}
\endxy
\label{fig7}
\end{center}
\end{figure}

Let $\calH^2  \to \bbH^2$ be the map from the upper-half plane to the unit disk given by the map $z\mapsto \frac{z-i}{z+i}$. One can show that the pre-image of the sides of our triangle are the lines $x = 1, x = -1$ and the upper half-circle of radius $r = 1/a$ with center at the origin. Recall that $a\le -1$ so that the half-circle is between the vertical lines.  Let us re-denote our group by $\Gamma_r$. It was shown by C. McMullen in \cite{McMullen} that as $r\to 0$, we have
$$
\delta_{\Gamma_r} = \frac{r+1}{2}+O(r^2),$$
while for $r\to 1$, we have
$$\delta_{\Gamma_r} \sim 1-\sqrt{1-r}.$$
 We have 
$$G(P)^{-1} = \frac{1}{4a^2}\begin{pmatrix}0&-2a&-2a\\
-2a&a-1&-a-1\\
-2a&-a-1&a-1\end{pmatrix}$$
If $a > 1$ is odd, the matrix $2a^2G(P)^{-1}$ defines an even integral lattice $L_r$. The  group $\Gamma_r$ acts on this lattice as a reflection group in the sides of the triangle. The lattice $L_r(-2)$ is realized as the Picard lattice of a K3 surface $X$. Since $L_r(-2)$ does represent $-2$, the surface $X$ does not contain smooth rational curves. This implies that a subgroup of finite index of $\Gamma_r$ acts on $X$ by automorphisms. So, we may apply Theorems \ref{AA} with the Hausdorff dimension computed by McMullen. Note that when $r = a = 1$, the lattice $L_r$ is isomorphic to the Apollonian lattice $\calAp_1$ and the group $\Gamma_1$ is isomorphic to the Apollonian group $\Ap_1$. 

Let us give another example of a realizable group $\Gamma(a,b,c)$. It is taken  from \cite{Baragar1}. Let $X$ be a K3 surface defined over an algebraically closed field of characteristic $\ne 2$ embedded  in $\bbP^2\times \bbP^2$ as a complete intersection of hypersurfaces  of multi-degree $(1,1)$ and $(2,2)$. Let $p_1,p_2:X\to \bbP^2$ be the two projections. They are morphisms of degree 2 branched along a plane curve $B_i$ of degree 6. We assume that $B_1$ is nonsingular and $B_2$ has a unique double point $q_0$ so that the fiber $p_2^{-1}(q_0)$ is a smooth rational curve $R$ that is mapped isomorphically under $p_1$ to a line. We assume that $X$ is general with these properties. More precisely, we assume that $\Pic(X)$ has a basis $(h_1,h_2,r)$, where $h_i = p_i^*(\textrm{line})$ and $r$ is the class of $R$. The intersection matrix of this  basis is equal to 
$$\begin{pmatrix} 2&4&1\\
4&2&0\\
1&0&-2\end{pmatrix}.$$
 Let $s = p_1^{*}((p_1)_*(r))-r = h_1-r$. It is a class of smooth rational curve $S$ on $X$. The pre-image of the pencil of lines through $q_0$ is an elliptic pencil $|F|$ on $X$ with $[F] = h_2-r$. The curve $S$ is a section 
of the elliptic fibration defined by the linear system $|F|$ and the curve $R$ is its 2-section that intersects $S$ with multiplicity 3.  Consider the following three automorphisms of $X$. The first two 
$\Phi_1$ and $\Phi_2$ are defined by the birational deck transformations of the covers $p_1$ and $p_2$. The third 
one $\Phi_3$ is defined by the negation automorphism of the elliptic pencil with the group law defined by the choice of $S$ as the zero section. 

It is easy to compute the matrix of each $\Phi_i$ in the basis $(f,s,r) = ([F],[S],[R])$ with the Gram matrix
$$\begin{pmatrix}0&1&2\\
1&-2&3\\
2&3&-2\end{pmatrix}.$$
 We have 
$\Phi_1^*(s) = r, \Phi_1^*(r) = s$ and $f' = \Phi_1^*(f) = af+bs+cr$. Since $\Phi_i^2$ is the identity and $(f,f) = 0$, we get 
$a = -1$ and $b = c$. Since  $(f',s) = (f,r) = 2$, we easily get $b = c = 3$. The matrix of $\Phi_1$, and similarly obtained matrices of $\Phi_2$ and $\Phi_3$ are as follows. 
$$A_1 = \begin{pmatrix} -1&0&0\\
3&0&1\\
3&1&0\end{pmatrix},\quad A_2 = \begin{pmatrix}1&4&0\\
0&-1&0\\
0&1&1\end{pmatrix}, \quad A_3 = \begin{pmatrix}1&0&14\\
0&1&4\\
0&0&-1\end{pmatrix}.$$
The transformations $\Phi_1' = \Phi_1\circ\Phi_2\circ \Phi_1, \Phi_2,\Phi_3$ are the reflections with respect to the vector $\alpha_i$, where
$$\alpha_1 = -4f+13s+10r,\ \alpha_2 = 4f-2s+r, \ \alpha_3 = 7f+2s-r.$$
The Gram matrix of the vectors $\alpha_1,\alpha_2,\alpha_3$ is equal to 
$$G = \begin{pmatrix}-22&143&220\\
143&-22&22\\
220&22&-22\end{pmatrix} = -22\begin{pmatrix}1&-\frac{13}{2}&-10\\
-\frac{13}{2}&1&-1\\
-10&-1&1\end{pmatrix}.$$
So, the group generated by $\Phi_1',\Phi_2,\Phi_3$ coincides with the triangle group $\Gamma(\frac{13}{2},10,1)$. The fundamental triangle  $P$ has one ideal vertex. The reflection group $\Gamma_P$ is a subgroup of infinite index of the 
group $\Gamma$
of automorphisms of $X$ generated by $\Phi_1,\Phi_2,\Phi_3$. 
Baragar proves that $\Gamma$ is isomorphic to $\Aut(X)$ (for sufficiently general $X$). He  finds  the following bounds for $\delta_{\Gamma}$
$$.6515< \delta_{\Gamma} < .6538.$$
This implies that
$$\delta_{\Gamma_P} < .6538.$$
\end{example}

\begin{example} This is again due to Baragar \cite{Baragar2}. We consider a nonsingular hypersurface $X$ in $\bbP^1\times \bbP^1\times \bbP^1$ of type $(2,2,2)$. It is a K3 surface whose Picard lattice contains the Apollonian lattice $\calAp(1)$. If $X$ is general, then the Picard lattice coincides with this lattice. We assume that one of the projections 
$p_{ij}:X\to \bbP^1\times \bbP^1$, say $p_{12}$, contains the whole $\bbP^1$ as  its fiber over some point $q_0\in \bbP^1\times \bbP^1$ . All the projections are degree 2 maps. Let $F_i, i = 1,2,3,$  be the general fibers of the projections $p_i:X\to \bbP^1$. Each $F_i$ is an elliptic curve whose image  under the map $p_{jk}$ is a divisor of type $(2,2)$. Let $f_1,f_2,f_3,r$ be the classes of the curves $F_1,F_2,F_3,R$. We assume that $X$ is general with these properties so that $\Pic(X)$ is generated by these classes. The Gram matrix of this basis is equal to 
$$\begin{pmatrix}0&2&2&0\\
2&0&2&0\\
2&2&0&1\\
0&0&1&-2\end{pmatrix}.$$
It is easy to see that 
$$\Pic(X) \cong \sfU\oplus \bsm -4&2\\
-2&-8\esm.$$
According to Vinberg's classification of 2-reflective hyperbolic lattices of rank 4 \cite{Vinberg}, the Picard lattice is not 2-reflective. Hence the image of the group $\Aut(X)$ in $\Or(\Pic(X))$ is of infinite index. 

Let $\Phi_{ij}$ be the automorphisms of $X$ defined by the deck transformations of the projections $p_{ij}$. Let $\Phi_4'$ be defined as the transformation $\Phi_3$ in the previous example with respect to the elliptic pencil $|F_3|$ with section $R$. The transformation $\Phi_{12}^*$ leaves the vectors $f_1,f_2,r$ invariant, and transforms $f_3$ to $2f_1+2f_2-f_3-r$. Thus $\Phi_{12}^*$ is the reflection with respect to the vector $\alpha_1 = -2f_1-2f_2+2f_3+r$. 

The transformation $\Phi_{13}^*$ leaves $f_1,f_3$ invariant and transforms $r$ in $r' = f_1-r$. It also transforms 
$f_2$ to some vector $f_2' = af_1+bf_2+cf_3+dr$. Computing $(f_2',f_1) = (f_2,f_1), (f_2',f_3) = (f_2,f_3), (f_2',r) =
(f_2,f_1-r)$, we find that $f_2' = 2f_1-f_2+2f_3$. Similarly, we find that 
$\Phi_{23}^*(f_2) = f_2, \Phi_{23}^*(f_3) = f_3, \Phi_{23}^*(r) = f_2-r$ and $\Phi_{23}^*(f_2) = -f_1+2f_2+2f_3$. 

It follows from the definition of a group law on an elliptic curve that 
$$\Phi_4'{}^*(f_3) = f_3,\  \Phi_4'{}^*(r) = r, \ \Phi_4'{}^*(f_i) = -f_i+8f_3+4r,\  i = 1,2$$
Consider the transformations
$$\Phi_1 = \Phi_{12},\  \Phi_2 = \Phi_{13}\circ \Phi_{12}\circ \Phi_{13},\  
\Phi_3 = \Phi_{23}\circ \Phi_{12}\circ \Phi_{23}, \ \Phi_4 = \Phi_4'\circ \Phi_{12}\circ \Phi_4'.$$
These transformations act on $\Pic(X)$ as the reflections with respect to the vectors 
\begin{eqnarray*}
\alpha_1 &=& -2f_1-2f_2+2f_3+r,\\
 \alpha_2 &=& \Phi_{13}^*(\alpha_1) = -5f_1+2f_2-2f_3-r,\\
 \alpha_3 &=& \Phi_{23}^*(\alpha_1) = 
2f_1-5f_2-2f_3-r,\\
\alpha_4 &=& \Phi_4(\alpha_1) = 2f_1+2f_2-30f_3-15r.
\end{eqnarray*}
The Gram matrix of these four vectors is equal to 
$$\begin{pmatrix}-14&14&14&210\\
14&-14&84&182\\
14&84&-14&182\\
210&182&182&-14\end{pmatrix} = -14\begin{pmatrix}1&-1&-1&-15\\
-1&1&-6&-13\\
-1&-6&1&-13\\
-15&-13&-13&1\end{pmatrix}
$$
 Let $P$ be the Coxeter polytope defined by this matrix. The  Coxeter group $\Gamma_P$ is generated by the reflections $\Phi_i^*, i = 1,2,3,4$.
 
 Baragar proves that the automorphisms  $\Phi_{ij}$ and $\Phi_4'$ generate a subgroup $\Gamma$ of $\Aut(X)$ of 
 finite index. His computer experiments suggest that 
 $$1.286< \delta_{\Gamma} < 1.306.$$
 Our reflection group $\Gamma_P$ generated by $\Phi_1,\ldots,\Phi_4$ is of infinite index in $\Gamma$. So, we obtain
 $$\delta_{\Gamma_P} < 1.306.$$
\end{example}

\begin{example} We consider a general Coble rational surface \cite{Cantat}. The orthogonal complement of the canonical class in $\Num(X)$ is isomorphic to $\sfU\oplus \sfE_8(-1)$. One can prove that the automorphism group of $X$ 
is isomorphic to the automorphism  group of a general Enriques surface \cite{Cantat} (true in any characteristic). We do the same, as in Example \ref{enr} to realize $\Gamma$ as an automorphism group of $X$. Since $X$ is rational, it gives a realization of $\Gamma$ as a 
group of Cremona transformations of $\bbP^2$. Taking the class of a line, we obtain the asymptotic of the growth of the function $\deg \Phi$, where $\deg \Phi$ is the algebraic degree of a Cremona transformation $\Phi$ from $\Gamma$.
\end{example}

\begin{remark}\label{new2} Taking into account Remark \ref{new}, we see that, for any even hyperbolic lattice of rank $> 4$ that contains primitively the lattice $M = \sfU\oplus \sfA_1(-1)\oplus \sfA_1(-1)$, the orthogonal group $\Or(L)$ contains finitely generated subgroups which are not geometrically finite. Embedding $M$ primitively in the lattice $\sfU\oplus \sfE_8(-1)$, we obtain that a general Enriques surface contains finitely generated groups of automorphisms which are not  geometrically finite.
\end{remark}

 \end{document}